\tikzset{->-/.style={decoration={
			markings,
			mark=at position .6 with {\arrow{>}}},postaction={decorate}}}
\newtheorem{Satz}{Proposition}[section]
\newtheorem{Proposition}[Satz]{Proposition}
\newtheorem{Korollar}[Satz]{Corollary}
\newtheorem{Lemma}[Satz]{Lemma}
\newtheorem{Theorem}[Satz]{Theorem}
\newtheorem{Theorem*}[]{Theorem}
\theoremstyle{definition}
\theoremstyle{remark}
\newtheorem{Bemerkung}[Satz]{Remark}
\newtheorem{Beispiel}[Satz]{Example}
\DeclareMathOperator{\prodG}{prod}
\newcommand{\Addresses}{{
		\bigskip
		\footnotesize
		
		Mireille Soergel, \textsc{IMB, UMR 5584, CNRS, Univ.  Bourgogne Franche-Comt\'e, 21000 Dijon, France}\par\nopagebreak
		\textit{E-mail address}: \texttt{mireille.soergel@u-bourgogne.fr}

}}
\title{Systolic complexes and group presentations}
\author{Mireille Soergel}
\date{}
\begin{document}
 \maketitle
 \begin{abstract}
 	We give conditions on a presentation of a group, which imply that its Cayley complex is simplicial and the flag complex of the Cayley complex is systolic. We then apply this to Garside groups and Artin groups. We give a classification of the Garside groups whose presentation using the simple elements as generators  satisfy our conditions. We then also give a dual presentation for Artin groups and identify in which cases the flag complex of the Cayley complex is systolic.
 \end{abstract}
 
 \section{Introduction}

In order to better understand a group, one approach is to understand whether it acts properly discontinuously and cocompactly by isometries on a non-positively curved space. For Riemannian manifolds, one could consider the sectional curvature of the manifold. More generally for geodesic metric spaces, we see if they satisfy the CAT(0) inequality. For simplicial complexes Januszkiewicz and Swiatkowski introduced in \cite{JanSwi} the notion of systolic complexes as a combinatorial form of non positive curvature. Though their 1-skeleta had been studied earlier by Chepoi under the name bridged graphs \cite{Chepoi}. A flag simplicial complex is systolic if it is simply connected and all of its vertex links are 6-large, i.e. all cycles of length 4 or 5 have diagonals. The main consequences for a group $G$ acting properly discontinuously and cocompactly on a systolic complex are the following:



\begin{enumerate}
 \item $G$ is biautomatic. This was already proven by Januszkiewicz and Swiatkowski in their original paper on systolicity \cite{JanSwi}. This especially implies solvability of the Word Problem and of the Conjugacy Problem and a quadratic Dehn function. 
 \item Every finitely presented subgroup of $G$ is systolic. Wise showed this for finitely presented subgroups of torsion-free subgroups. For all systolic groups, this has been shown in \cite{Zadnik},\cite{HanMar}.
 \item Virtually solvable subgroups of $G$ are either virtually cyclic or virtually $\mathbb{Z}^2$. This is mentioned in \cite{HuaOsa}. 
 Bestvina showed that virtually solvable subgroups of spherical Artin groups are abelian \cite{Bestvina}. Gersten-Short showed a result for polycyclic subgroups of biautomatic groups \cite{GerSho}.
 \item The centralizer of an infinite-order element of $G$ is commensurable with \linebreak $F_n\times\mathbb{Z}$ or $\mathbb{Z}$ \cite{Crisp}, \cite{Elsner}.
\end{enumerate}
 As a group naturally acts on its Cayley graph, this leads to the following questions: How do we construct a flag simplicial complex from the Cayley graph? Can we give conditions on the presentation which ensure that this space is systolic? The goal of this paper is to give a partial answer to these questions. 
Systolicity is already known for large type Artin groups \cite{HuaOsa} and right angled Artin groups with bipartite defining graph \cite{ElsPrz}. Triangular Coxeter groups (except (2,4,4),(2,4,5),(2,5,5)) are also systolic \cite{PrzSch}. In this paper we will focus our applications on Garside groups and Artin groups.
We will first give some background on systolic complexes and set some conditions on group presentation that lead to a simplicial Cayley complex with free group action. We introduce the notion of a restricted triangular presentation. We say that a presentation $\langle S\mid R\rangle$ of some groupe $G$ is a restricted triangular presentation if $S\cap S^{-1}=\emptyset$, $R = \{a\cdot b\cdot c^{-1}\mid abc^{-1} =e \text{ in } G\}$ and for $a,b,c\in S$ $abc \in S \Rightarrow ab, bc\in S$. In this case the Cayley complex is simplicial. We then establish when the flag complex of such a Cayley complex is sytolic. To do so we study the cycles of length four and five. This leads to

\begin{Theorem*}(Theorem \ref{systolic})\label{1}
	Consider a group $G$ with generating set $S$, where $G$ has a finite restricted triangular presentation with respect to $S$. Then the complex $Flag(G,S)$ is a simply connected simplicial complex. It is systolic if and only if the generating set $S$ satisfies the following conditions: 
	\begin{enumerate}[label=\arabic*)]
		
		\item If $\exists u, w, a, b, c, d\in S$, $u\neq w$, $a\neq d$, with $ua=wb\in S$ and $ud=wc\in S$ then $\exists k\in S$ such that $w=uk$ or $ua = udk$.
		\item If $\exists v,x, a, b, c, d\in S$, $v\neq x$, $a\neq b$, with $bv = cx\in S$ and $av=dx\in S$ then $\exists k\in S$ such that $v= kx$ or $ av = kbv$.
		
		\item If $\exists u, v, x, b, c\in S$, $v\neq x$,  with $ux\in S$, $uv \in S$ and $vb= xc\in S$ then $\exists k\in S$ such that $k = uvb$ or $v = xk$.

		\item If $\exists v, w, x, a, d\in S$, $v\neq x$, with $vw\in S$, $xw\in S$ and $dx= av\in S$ then $\exists k\in S$ such that $k = avw$ or $x=kv$.
	 
		\item If $\exists u, v, w, x\in S$, $v\neq x$, $u\neq w$, with $wv\in S$, $wx\in S$, $uv\in S$ and $ux\in S$ then $\exists k\in S$ such that $w=ku$ or $x = vk$.
		
	\end{enumerate}

	Moreover, this implies that $G$ is a systolic group.
\end{Theorem*}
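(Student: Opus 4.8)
The plan is to reduce systolicity to a local, combinatorial condition and then translate that condition, link by link, into the five algebraic statements. Recall that $Flag(G,S)$ is systolic precisely when it is simply connected and locally $6$-large, i.e. every vertex link is flag and contains no full $4$- or $5$-cycle. Simple connectivity and simpliciality are already guaranteed by the hypothesis that $\langle S \mid R\rangle$ is a restricted triangular presentation: the Cayley complex is simplicial and simply connected, and passing to its flag complex only adds simplices of dimension $\geq 2$, hence does not change the fundamental group. So the whole content is local $6$-largeness, and since $G$ acts simplicially and transitively on the vertices of $Flag(G,S)$, it suffices to analyse the link $L := \mathrm{lk}(e)$ of the identity vertex.

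First I would set up a dictionary for $L$. Its vertices are the neighbours of $e$ in the Cayley graph, namely the elements of $S \cup S^{-1}$; two such vertices $g,h$ span an edge of $L$ exactly when $g^{-1}h \in S \cup S^{-1}$ (so that $\{e,g,h\}$ is a filled triangle), and higher simplices are detected by the flag condition. Under this dictionary the two potential diagonals of a $4$-cycle are precisely the two pairs of opposite vertices being adjacent, and ``being adjacent'' unwinds to ``some product lies in $S \cup S^{-1}$''; this is exactly the $\exists\, k \in S$ conclusion appearing in each of the five conditions. The restricted triangular hypothesis $abc \in S \Rightarrow ab,bc \in S$ is what lets me recognise when such a witness -- possibly a length-two or length-three product of generators -- is itself realised by an edge of $L$, and it is also what forces the intermediate products along a path to be generators.

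The heart of the argument is a case analysis of the full $4$-cycles of $L$ organised by the ``sign pattern'' of their vertices, i.e. which of the four vertices are outgoing ($\in S$) and which incoming ($\in S^{-1}$). Up to the dihedral symmetry of the cycle this pattern is one of $++++$, $----$, $+++-$, $+---$, $+-+-$ or $++--$. For each of the first five I would write out the four edge-adjacencies as membership statements in $S$ and read off that the two opposite-pair adjacencies are exactly the two alternatives in the $\exists\, k$ conclusion; matching these to the hypotheses shows that condition $1)$ forbids a full $++++$ cycle, condition $2)$ a full $----$ cycle, conditions $3)$ and $4)$ the mixed cycles $+++-$ and $+---$, and condition $5)$ the alternating cycle $+-+-$. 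The non-degeneracy clauses ($u \neq w$, $a \neq d$, $v \neq x$, and so on) encode that the four vertices are genuinely distinct, so that in each case a full $4$-cycle of the given type exists if and only if the corresponding condition fails, giving both directions.

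The remaining work -- and what I expect to be the main obstacle -- is to show that the five conditions already preclude the leftover pattern $++--$ and every full $5$-cycle, so that no sixth hypothesis is needed. Here I would argue by reduction: given a full $++--$ cycle, or a full $5$-cycle, the restricted triangular property forces certain two- and three-fold products of the generators labelling consecutive edges to lie again in $S$, and these produce a chord or a shorter full cycle of one of the five handled types, contradicting the conditions. Carrying this reduction out uniformly, while keeping track of edge orientations and of the side (left versus right) on which the defining relations are read, is the delicate bookkeeping of the proof. Once local $6$-largeness is in hand, $Flag(G,S)$ is systolic; and since $G$ acts on it freely -- hence properly discontinuously -- and cocompactly (finiteness of $S$ yields finitely many simplices modulo $G$), $G$ is a systolic group.
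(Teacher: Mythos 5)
Your proposal is correct in outline and follows essentially the same route as the paper: reduce to 6-largeness of the link $L$ of the identity (simple connectivity coming from the restricted triangular presentation, as in Proposition \ref{triangular}), classify the potentially diagonal-free 4-cycles of $L$ by the sign pattern of their vertices, and match the five surviving patterns $++++$, $----$, $+++-$, $+---$, $+-+-$ to conditions 1)--5), which is exactly the content of Lemmas \ref{4-cycle} and \ref{diagonals}.

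One clarification on the part you defer as the ``main obstacle.'' In the paper, the exclusion of the pattern $++--$ and of all full 5-cycles does not proceed by reduction to the five conditions at all: it follows unconditionally from the restricted triangular presentation, via Remark \ref{exceptions}. Since $R$ contains no relations of the form $abc=e$, there are no edges from positive to negative vertices of $L$, and the condition $abc\in S\Rightarrow ab,bc\in S$ yields four forbidden local configurations, each of which forces a chord. A short parity/orientation check (Lemma \ref{5-cycle} and the last case of Lemma \ref{4-cycle}) shows every 5-cycle and every 4-cycle with two adjacent negative vertices contains one of these configurations, hence a diagonal --- the contradiction is with diagonal-freeness itself, not with conditions 1)--5). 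This matters for the logical structure of the ``if and only if'': because these residual cases are excluded for free, failure of any one of the five conditions genuinely produces a diagonal-free 4-cycle, and conversely the five conditions alone suffice for 6-largeness. So the bookkeeping you anticipate as delicate is in fact brief, and your phrase ``contradicting the conditions'' should read ``contradicting fullness of the cycle.''
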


We say that a presentation satisfying the conditions of Theorem 1 is a systolic presentation.
  We present some examples which show that these conditions are all necessary.

   There are two main applications of Theorem \ref{1}. The first concerns Garside groups. They were introduced by Dehornoy and Paris in \cite{DehPar} as a generalization of spherical Artin groups. The Garside structure on a group naturally gives a presentation leading to a simplicial Cayley graph, we call this the Garside presentation of a Garside group. We can classify the Garside groups whose Garside presentation satisfy Theorem \ref{systolic}. Consider the following definitions. Let $x_1,\dots, x_n$ be $n$ letters and let $m$ be a positive integer. We define $$\prodG(x_1,\dots, x_p;m) = \underbrace{x_1x_2\dots x_px_1x_2\dots}_{m}.$$ and $\prodG(x_1,\dots,x_p;0) = e$. Consider the group \begin{equation*}\begin{split}G_{n,m} = \langle x_1,\dots, x_n\mid &\prodG(x_1,\dots,x_n;m) = \prodG(x_2,\dots,x_{n}, x_1;m)=\dots\\ &=\prodG(x_n, x_1,\dots,x_{n-1};m)\rangle.\end{split}\end{equation*} We can then make the following statement about systolic Garside groups:
   
   \begin{Theorem*}(Theorem \ref{systolic Garside}) Let $G$ be a Garside group of finite type.  Then $G$ has a systolic Garside presentation if and only if $G = (\ast_{i=1}^p G_{n_i,m_i})/(\Delta_{n_i,m_i}=\Delta_{n_j,m_j} \forall i, j)$ for some positive integers $p, n_1,\dots, n_p$ and $m_1,\dots, m_p$.
   	
   \end{Theorem*}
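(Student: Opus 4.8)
The plan is to run everything through Theorem \ref{1}. The Garside presentation takes for $S$ the set of nontrivial simple elements, i.e.\ the nontrivial divisors of the Garside element $\Delta$; these form a lattice under left-divisibility, and whenever $abc\in S$ one has $ab\preceq abc\preceq\Delta$ and $bc$ a right-divisor of the simple element $abc$, so $ab,bc\in S$. Thus the Garside presentation is automatically a restricted triangular presentation, $Flag(G,S)$ is a simply connected simplicial complex, and it is systolic exactly when the five conditions of Theorem \ref{1} hold. The first step is therefore to translate conditions 1)--5) into statements about the lattice of simple elements: each hypothesis such as $ua=wb\in S$ or $vb=xc\in S$ describes a configuration of common left/right multiples of simples, while each conclusion $w=uk$ or $ua=udk$ asserts a divisibility collapse. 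I would record these reformulations once and reuse them in both directions.

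For the direction that a systolic Garside group has the stated form, I would exploit the rigidity these conditions impose. Define a graph on the atoms of $G$ by joining two atoms whenever their product lies in $S$. Conditions 5) and 3)--4) control precisely the $2\times2$ and the ``triangle-with-a-square'' configurations of atoms, and I expect them to force that within each connected component the only relations among the $n$ atoms present are the cyclic equalities $\prodG(x_1,\dots,x_n;m)=\prodG(x_2,\dots,x_n,x_1;m)=\dots$, so that the standard parabolic submonoid carried by that component is the Garside monoid of some $G_{n_i,m_i}$ (for $n_i=2$ this is just a dihedral Artin monoid). Conditions 1)--2) then govern how distinct components meet: two atoms in different components can share no simple multiple other than $\Delta$, which forces all components to have the same top element and forbids any surviving cross-component relation. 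Collecting the components yields the amalgam $(\ast_{i=1}^p G_{n_i,m_i})/(\Delta_{n_i,m_i}=\Delta_{n_j,m_j}\ \forall i,j)$.

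For the converse I would first verify that such an amalgam is again a Garside group of finite type: gluing finitely many finite-type Garside monoids along their Garside elements produces a Garside monoid whose lattice of simples is the wedge of the individual lattices, identified along $e$ and along $\Delta$. With this description the verification of conditions 1)--5) splits into two cases. If all simples appearing in a hypothesis lie in one factor, the condition must be checked inside a single $G_{n,m}$, which I would do from an explicit description of the divisors of $\Delta_{n,m}$ as the prefixes and suffixes of the alternating products $\prodG(x_i,\dots;k)$, $k\le m$, whose multiplication is completely determined by $n$ and $m$. If the simples lie in different factors, the only common multiple forced by the hypotheses is $\Delta$, and the required $k$ is produced at once. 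This reduces the whole converse to a finite check inside the building blocks.

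The main obstacle is the forward structural analysis: showing that conditions 1)--5) genuinely force the alternating-product relations on each component and leave no extra relations across components. The subtlety is that the conditions are stated as implications producing a single dividing element $k$, so turning these purely local divisibility statements into the global normal form $\prodG(x_1,\dots,x_n;m)$ will require an induction on the length of simple elements that propagates the $2\times2$ rigidity up the lattice. A secondary difficulty is the explicit computation of the divisor lattice of $G_{n,m}$ needed for the converse, together with confirming that these lattices satisfy all five conditions for every admissible $n$ and $m$.
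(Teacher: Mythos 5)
Your overall route matches the paper's: both pass from the five conditions of Theorem \ref{systolic} to divisibility statements about simple elements, then analyze atoms for the forward direction and do explicit computations in $G_{n,m}$ plus a cross-factor triviality argument for the converse. Indeed, the ``reformulation'' you defer to is exactly the paper's intermediate result (Theorem \ref{Garside}): the Garside presentation is systolic if and only if $a\wedge_L b\in\{e,a,b\}$ and $a\wedge_R b\in\{e,a,b\}$ for all simples $a,b$. Note that proving this criterion is itself nontrivial in both directions (the paper exhibits explicit failures of conditions 3) and 4) when some gcd is a proper simple), so ``recording the reformulations once'' is more work than your sketch suggests.

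The genuine gap is in the forward structural analysis, which you yourself flag as the main obstacle and then only conjecture (``I expect them to force\dots'', ``will require an induction\dots''). Your graph on atoms (join $a,b$ when $ab\in S$) with an induction propagating $2\times 2$ rigidity does not by itself yield the cyclic structure: a priori a connected component could be any graph, and nothing in your sketch rules out branching. The decisive observation, which your proposal is missing, is a \emph{uniqueness} statement that follows immediately from the gcd criterion: if $aa_1,aa_2\in S$ with $a,a_1,a_2$ atoms, then $aa_1\wedge_L aa_2\neq e$ (both are right-multiples of $a$), hence $aa_1=aa_2$ and $a_1=a_2$. So each atom $a$ has a unique atom $\xi(a)$ with $a\xi(a)\in S$, and symmetrically a unique $\rho(a)$ with $\rho(a)a\in S$; since $\rho\circ\xi=\mathrm{id}$, the map $\xi$ is a permutation of the finite atom set. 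Its disjoint cycles --- not connected components of an undirected graph --- are what produce the factors: iterating the same uniqueness argument along a cycle $(a_{i,1},\dots,a_{i,n_i})$ shows the only simples extending $a_{i,1}$ are the alternating prefixes, forcing $\Delta=\prodG(a_{i,1},\dots,a_{i,n_i};m_i)$ for some $m_i\geq 2$, and balancedness of $\Delta$ then gives the cyclic relations defining $G_{n_i,m_i}$. Without this permutation argument your ``induction on the length of simple elements'' has no engine, and the claim that cross-component atoms share no simple multiple other than $\Delta$ (which you assert) also falls out of the same uniqueness rather than needing conditions 1)--2) separately.
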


The second application concerns Artin groups. To state the next result we require a few definitions. An \textit{orientation} on a simple graph $\Gamma$ is an assignement $o(e)$ for each edge $e\in E(\Gamma)$ where $o(e)$ is a set of one or two endpoints of $e$. An edge with both endpoints assigned is \textit{bioriented}. The startpoint $i(e)$ is an assignement of one or two startpoints of $e$, which is consistent with the choice of $o(e)$. If $o(e)$ consists of one point, $i(e)$ consists of one point such that $e = (i(e),o(e))$, if $o(e)$ consists of two points then so does $i(e)$.
We say that a cycle $\gamma$ is \textit{directed} if for each $v\in\gamma$ there is exactly one edge $e\in\gamma$ with $v\in o(e)$. A cycle is \textit{undirected} if it is not directed.
We say that a 4-cycle $\gamma = (a_1,a_2,a_3,a_4)$ is \textit{misdirected} if 
$a_2\in o(a_1,a_2), a_2\in o(a_2,a_3), a_4\in o(a_3,a_4)$ and $a_4\in o(a_4,a_1)$.

\begin{tikzpicture}

\draw[fill] (0,0) circle [radius=0.05];
\draw[fill] (1,0) circle [radius=0.05];
\draw[fill] (0,1) circle [radius=0.05];
\draw[fill] (1,1) circle [radius=0.05];
\draw[->-, >=latex](0,0)node[left]{$a_1$}--(1,0)node[right]{$a_4$};
\draw[->-, >=latex](1,1)node[right]{$a_3$}--(1,0);
\draw[->-, >=latex](0,0)--(0,1)node[left]{$a_2$};
\draw[->-, >=latex] (1,1)--(0,1);
\end{tikzpicture}

Consider the notation $\left[xyx\dots\right]_k = \underbrace{xyx\dots}_{k}\ $ and $\left[\dots xyx\right]_k = \underbrace{\dots xyx}_{k}\ $ for some $k\in\mathbb{N}$. Given a finite labeled simple graph $\Gamma$, the \textit{Artin group associated to $\Gamma$} is given by \begin{multline*}A_{\Gamma} = \langle s_v, v\in V\mid \left[s_vs_ws_v\dots\right]_{m_e} = \left[s_ws_vs_w\dots\right]_{m_e} \\ \text{ for all edges } e = (v,w) \text{ with label } m_e\rangle.\end{multline*}
 We can now present the following result:

\begin{Theorem*}(Theorem \ref{Artin})
	Let $\Gamma$ be a simple graph, with edges labeled by numbers $\geq 2$ and with an orientation $o$ such that an edge is bioriented if and only if it has label $2$. Assume that 
	every 3-cycle is directed and no 4-cycle is misdirected. Let $A$ be the Artin group associated to $\Gamma$.  Then the dual presentation of $A_{\Gamma}$ induced by this orientation is systolic.
\end{Theorem*}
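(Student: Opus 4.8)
The plan is to verify that the dual presentation satisfies the five local conditions of Theorem~\ref{systolic}, from which both systolicity of $Flag(A_\Gamma,S)$ and the fact that $A_\Gamma$ is a systolic group follow immediately. The first step is to make the generating set $S$ explicit and to check that the dual presentation is a finite restricted triangular presentation. For each edge $e=(v,w)$ with label $m_e$, the dihedral subgroup $\langle s_v,s_w\rangle\cong A_{I_2(m_e)}$ carries a dual Garside structure whose proper simple elements are the $m_e$ reflections (atoms) together with the Coxeter element $c_e$; the chosen orientation of $e$ selects which reflection is the first atom, hence fixes the cyclic ordering of the atoms, and the bi-orientation convention for label-$2$ edges encodes that the two atoms $s_v,s_w$ commute. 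Taking $S$ to be the union of these atom-sets and Coxeter elements over all edges, identified along the shared vertex-reflections $s_v$, gives $S\cap S^{-1}=\emptyset$ by positivity of the dual monoids, and the defining relations are exactly the equalities $a_i^e a_{i+1}^e=c_e$, which have the required form $a\cdot b\cdot c^{-1}$. I would then check the closure condition $abc\in S\Rightarrow ab,bc\in S$: a product of three atoms can lie in $S$ only when the three generators come from a common directed configuration, and here the hypothesis that every $3$-cycle is directed guarantees that the intermediate products $ab$ and $bc$ are themselves simple elements of the relevant local piece.

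Next I would set up a dictionary translating each algebraic hypothesis of conditions (1)--(5)---that prescribed products of generators again lie in $S$---into a configuration of oriented edges of $\Gamma$. The equalities $ua=wb\in S$, $vb=cx\in S$, $wv,wx,uv,ux\in S$, and so on force the generators involved to come from one or two edges meeting along a common vertex, and the required conclusion (existence of $k\in S$) is exactly the assertion that these edges embed in a directed triangle or a non-misdirected square. I would then dispose of the conditions by case analysis. Conditions (1)--(4) concern triples of generators spanning a triangular region: a directed $3$-cycle yields a consistent cyclic ordering of the three vertex-reflections, and the corresponding factorisation supplies the element $k$, so the hypothesis that every $3$-cycle is directed settles these four conditions. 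Condition (5) is the square condition $wv,wx,uv,ux\in S$, and it is governed by $4$-cycles: here I invoke the hypothesis that no $4$-cycle is misdirected to rule out precisely the configuration $a_2\in o(a_1,a_2)$, $a_2\in o(a_2,a_3)$, $a_4\in o(a_3,a_4)$, $a_4\in o(a_4,a_1)$, which is the unique local pattern in which no diagonal $k$ with $w=ku$ or $x=vk$ exists. Throughout, the bioriented$\,\Leftrightarrow\,$label-$2$ convention is what makes the commuting case symmetric and keeps the dihedral relations consistent with the orientation data.

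The main obstacle I anticipate lies in the bookkeeping of this second stage: for each of the five conditions one must enumerate all the ways in which the prescribed products can lie in $S$, carefully distinguishing whether the generators come from a single dihedral (edge) piece or from several edges sharing a vertex, and then match each surviving case to a triangle or a square of $\Gamma$. The delicate point is to show that the failure of condition (5) corresponds \emph{exactly} to a misdirected $4$-cycle---no more and no fewer configurations---so that the hypothesis is both sufficient here and, as the accompanying examples demonstrate, necessary. Once this enumeration is complete all five conditions of Theorem~\ref{systolic} hold, and that theorem yields that the dual presentation induced by the orientation is systolic and that $A_\Gamma$ is a systolic group.
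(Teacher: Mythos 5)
Your overall strategy coincides with the paper's: exhibit the dual presentation as a finite restricted triangular presentation, then check the five conditions of Theorem \ref{systolic} by translating each hypothesis into a configuration of oriented edges of $\Gamma$, with directed $3$-cycles governing the triangle-type conditions and non-misdirected $4$-cycles governing condition 5). Your identification of the generating set as the local dual-Garside simples (atoms plus Coxeter element of each dihedral piece, glued along the standard generators) is exactly the set $S$ of Lemma \ref{dual presentation}.

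The genuine gap is in the first stage. Being a restricted triangular presentation is not a local statement about the dihedral pieces: one must show that \emph{every} identity $ab=c$ with $a,b,c\in S$ holding in the global group $A_\Gamma$ already appears among the listed relations, and that no product $abc$ of three generators equals $e$ or lies in $S$. You assert that ``the defining relations are exactly the equalities $a_i^e a_{i+1}^e=c_e$'', but the whole difficulty is to exclude cross-edge coincidences, e.g.\ a product of two simples attached to different edges being equal in $A_\Gamma$ to some $\Delta_f$. The paper's proof of Lemma \ref{dual presentation} does this by passing to the Artin monoid via the injection $A_\Gamma^+\hookrightarrow A_\Gamma$ and running a rewriting argument on positive words, and it is precisely there that the directed $3$-cycle hypothesis enters: it guarantees that a letter $x_k$ cannot commute with both $x_p$ and $x_q$, so no braid relation can fire to alter the first or last letter of the relevant positive words. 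Your proposal instead locates the role of directed $3$-cycles in the closure condition $abc\in S\Rightarrow ab,bc\in S$; but that condition is vacuous for this presentation (the homomorphism $\xi:A_\Gamma\to\mathbb{Z}$ with $\xi(\Delta_e)=2$ and $\xi=1$ on all other generators shows a product of three generators never lies in $S$), so your account both skips the real argument and misattributes where the hypothesis is needed. A secondary inaccuracy of the same kind occurs in your second stage: directed $3$-cycles do not ``supply the element $k$'' for conditions 1)--4). In the paper, conditions 1) and 2) are ruled out with no hypothesis on $\Gamma$ at all (two distinct simples cannot left- or right-divide two distinct elements $\Delta_e,\Delta_f$ in the required way), and for conditions 3) and 4) the hypothesis configuration is shown to be impossible because it would force an undirected triangle; all five conditions hold vacuously rather than by producing diagonals.
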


\textbf{Acknowledgements} The author thanks their advisors Thomas Haettel and Luis Paris for many helpful discussions and advice. 
\newline
The author is partially supported by the French project “AlMaRe” (ANR-19-CE40-0001-01) of the ANR.
   

 \section{Preliminaries}


We start with some background on simplicial and systolic complexes. Let $X$ be a simplicial complex. Assume it is finite dimensional and locally finite. We denote its $k$-skeleton by $X^{(k)}$. Then $X^{(0)}$ is the set of vertices of $X$. The subcomplex spanned by $A\subset X^{(0)}$ is the largest subcomplex of $X$ which has $A$ as its set of vertices.
The complex $X$ is \textit{flag} if every set of pairwise adjacent vertices spans a simplex. A flag complex is uniquely determined by its 1-skeleton. For a simplex $\sigma\in X$ we can define its \textit{link} in $X$,  $$\text{Lk}(\sigma, X) = \{\tau\in X\mid \tau\cap\sigma = \emptyset \\ \text{ and } \tau\cup\sigma\in X\}.$$
A \textit{cycle} in X is the image of a simplicial map $f: S^1\rightarrow X$ from a triangulation of the 1-sphere to $X$. If $f$ is injective, the cycle is embedded. Let $\gamma$ be an embedded cycle in $X$. The \textit{length} of $\gamma$, $|\gamma|$, is the number of edges of $\gamma$. We say that $\gamma$ is a \linebreak \textit{$|\gamma|$-cycle}. A \textit{diagonal} of $\gamma$ is an edge that connects two nonconsecutive vertices of $\gamma$. An embedded cycle is \textit{diagonal free} if there are no edges between nonconsecutive vertices. We say that two vertices $v$ and $w$ are adjacent if there exists an edge between $v$ and $w$, we then write $v \sim w$.
A simplicial complex is \textit{6-large} if every embedded cycle $\gamma$ with $4\leq|\gamma|<6$ has a diagonal.
A simplicial complex is \textit{systolic} if it is connected, simply connected and if Lk$(v, X)$ is flag and 6-large for all vertices $v\in X$. A group is \textit{systolic} if it acts properly discontinuously and cocompactly on a systolic complex. To know when a Cayley complex is systolic we first need to determine when it is simplicial.

Let $G$ be a group and $S\subset G$ a finite set of generators. Suppose additionally that $S\cap S^{-1} = \emptyset$, this especially implies $e\notin S$ and $s^2 \neq e$ for all $s\in S$. Let $\Gamma(G,S)$ be the Cayley graph of $G$ relative to $S$. Its vertices and edges are \linebreak $V(\Gamma(G,S)) =  \{v[g]\mid g\in G\}$ and $E(\Gamma(G,S)) = \{e[g,s]\mid g\in G, s\in S\}$ where the edge $e[g,s]$ goes from $v[g]$ to $v[gs]$. We also write $e[g,s] = (v[g],v[gs])$. As $S\cap S^{-1} = \emptyset$, the graph $\Gamma(G,S)$ is simplicial. So we can define $Flag(G,S)$ as the flag complex of $\Gamma(G,S)$. As $Flag(G,S)$ is the flag complex of $\Gamma(G,S)$, the group $G$ naturally acts properly discontinuously and cocompactly by isometries on $Flag(G,S)$.

\begin{Proposition}\label{triangular} Let $G$ be a group and $S \subset G$ a finite generating set. Suppose additionally that $S\cap S^{-1} = \emptyset$. Then $Flag(G,S)$ is a simply connected simplicial complex and $\pi_1(Flag(G,S)/G) = G$ if and only if $G$ admits the presentation $G = \langle S\mid R\rangle $ where
 $ R = \{a\cdot b\cdot c\mid a, b ,c\in S \text{ with }abc= e \text{ in } G\}\cup \{a\cdot b\cdot c^{-1}\mid a, b ,c\in S \text{ with } abc^{-1} =e \text{ in } G\}$.
\end{Proposition}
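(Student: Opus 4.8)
The plan is to realize $Flag(G,S)^{(2)}$ as a Cayley-type $2$-complex for the putative presentation $\langle S\mid R\rangle$ and to extract both assertions from covering-space theory. Since simple connectivity and $\pi_1$ depend only on the $2$-skeleton, I first reduce everything to $X := Flag(G,S)^{(2)}$, which is obtained from the Cayley graph $\Gamma(G,S)$ (simplicial because $S\cap S^{-1}=\emptyset$) by filling in every $3$-clique with a triangle. The whole proof then hinges on matching these triangles with the relators of $R$.

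The combinatorial heart is this dictionary. Using left multiplication I may assume a triangle has vertices $v[e],v[t_1],v[t_2]$, and the three edges force $t_1,t_2,t_1^{-1}t_2\in S\cup S^{-1}$. Reading the boundary loop gives a length-$3$ word over $S\cup S^{-1}$ evaluating to $e$, and I would run the short case analysis on how many of its letters lie in $S$ versus $S^{-1}$ to show that, up to cyclic rotation and inversion, this word is either $abc$ or $abc^{-1}$ with $a,b,c\in S$, i.e. exactly a relator of $R$. Conversely, each relator $abc=e$ (resp. $abc^{-1}=e$) yields a genuine $3$-clique $\{v[e],v[a],v[ab]\}$, since then $ab=c^{-1}\in S^{-1}$ (resp. $ab=c\in S$) supplies the missing edge. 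Thus the $G$-orbits of triangles of $X$ correspond precisely to the relators of $R$, occurring at every translate.

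With this dictionary I would set $H:=\langle S\mid R\rangle$, let $\phi\colon H\to G$ be the canonical surjection (well defined since each relator of $R$ holds in $G$, and onto since $S$ generates $G$) and put $N=\ker\phi$. Let $\widetilde K$ be the Cayley $2$-complex of $\langle S\mid R\rangle$: it is simply connected, $H$ acts freely on it, and the quotient is the presentation complex $K$ with $\pi_1(K)=H$. The dictionary identifies $X$ with $\widetilde K/N$ (vertices via $H/N=G$, edges via $\Gamma(G,S)$, triangles via the relator correspondence); since $N\le H$ acts freely on the simply connected $\widetilde K$, this gives $\pi_1(X)\cong N$. Hence $Flag(G,S)$ is simply connected if and only if $N=1$, i.e. if and only if $\phi$ is an isomorphism, i.e. if and only if $G$ admits the presentation $\langle S\mid R\rangle$. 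In that case $X/G=\widetilde K/H=K$ yields $\pi_1(Flag(G,S)/G)=H=G$, and conversely this last equality forces $N=1$, closing both directions.

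The step I expect to demand the most care is the identification $X\cong\widetilde K/N$ and the quotient statement. First, $R$ as defined is redundant, containing all cyclic rotations and inverses of each relator, so $\widetilde K$ carries several $2$-cells over each filled triangle of $X$; I must verify that collapsing these redundancies alters neither $\pi_1$ nor the normal closure $\langle\langle R\rangle\rangle$, so that the computation $\pi_1(X)\cong N$ survives. Second, and more delicately, the hypothesis $S\cap S^{-1}=\emptyset$ ensures (via $s^2\neq e$) that no element inverts an edge, so the action on the $1$-skeleton is free; but an order-$3$ generator can rotate a triangle, so the $G$-action on $X$ need not be free in general. Computing $\pi_1$ of the quotient $Flag(G,S)/G$ therefore requires either restricting to the free case or interpreting the quotient through the developability of the associated complex of groups, and this is the main technical obstacle to a fully uniform argument.
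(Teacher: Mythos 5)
Your reduction to the $2$-skeleton and your triangle--relator dictionary coincide with the paper's starting point, and your detour through the Cayley $2$-complex $\widetilde K$ of $H=\langle S\mid R\rangle$ does settle the simple-connectivity half correctly, in fact rather robustly: $N=\ker(H\to G)$ acts freely on the simply connected $\widetilde K$, the quotient $\widetilde K/N$ has the same $1$-skeleton as $X=Flag(G,S)^{(2)}$, and its $2$-cells are attached along exactly the same triangular loops as the $2$-simplices of $X$ (the redundant parallel cells coming from cyclic rotations of relators change $\pi_2$, not $\pi_1$), so $\pi_1(X)\cong N$ and $Flag(G,S)$ is simply connected if and only if $\phi$ is an isomorphism. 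Note, however, that you should only claim agreement of fundamental groups here, not an isomorphism of complexes, and certainly not a $G$-equivariant one --- that distinction is precisely what bites in the next step.

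The genuine gap is the second conjunct, $\pi_1(Flag(G,S)/G)=G$, which you flag but do not prove: the identity $X/G=\widetilde K/H$ fails exactly when some element of $G$ rotates a triangle, and this is not a removable technicality if the quotient is read as the topological orbit space. Take $G=\mathbb{Z}/3$ and $S=\{t\}$: then $R=\{t\cdot t\cdot t\}$, so $G=\langle S\mid R\rangle$ holds, and $X$ is a single filled triangle on which $G$ acts by rotation; the orbit space is a cone (a disk) with trivial fundamental group, whereas $\widetilde K/H$ is the presentation complex with $\pi_1=\mathbb{Z}/3$. So with the orbit-space reading the second conjunct is simply false, and no amount of care with redundant cells will recover it. The statement only holds if $Flag(G,S)/G$ is understood combinatorially --- as the quotient complex (equivalently, the orbihedron/complex of groups) whose cells are the $G$-orbits of simplices with attaching words read off from the edge labels --- and that is in effect what the paper does: it matches the orbits of $2$-simplices with the relators, concludes directly that $\pi_1(Flag(G,S)/G)=\langle S\mid R\rangle$, and finishes with the observation that $Flag(G,S)$ is simply connected iff it is the universal cover of this quotient. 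Your proposal defers exactly this commitment (``restrict to the free case or use developability''), so as submitted it proves the equivalence ``$Flag(G,S)$ simply connected $\Leftrightarrow$ $G=\langle S\mid R\rangle$'' but not the full proposition.
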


\begin{proof}

To see when $Flag(G,S)$ is simply connected it is enough to take a look at its 2-skeleton. There is a 2-simplex in $Flag(G,S)$ for every set of 3 pairwise adjacent vertices. As in $\Gamma(G,S)$ edges are labeled by elements in $S$ and vertices correspond to elements of $G$, so are edges and vertices in $Flag(G,S)$. Hence we can interpret the existence of a 2-simplex in terms of relations on the generators. At each vertex $v[g]$ in $Flag(G,S)$ and for every triple $a, b, c\in S$ with $a\cdot b\cdot c = e$ or $a\cdot b\cdot c^{-1} = e$ in $G$ there is a 2-simplex with vertices $v[g]$, $v[ga]$, $v[gab]$ and edges $e[g,a]$, $e[ga,b]$, $e[gab,c]$ or $e[g,a]$, $e[ga,b]$, $e[g,c]$. On the other hand each 2-simplex has vertices and edges which correspond to such a triple of generators. 
This implies that \begin{multline*}
\pi_1(Flag(G,S)/G) = \langle S\mid a\cdot b\cdot c = e \text{ or } a\cdot b\cdot c^{-1} = e \text{ for all } a, b, c \\ \text{for which one of these equalities holds in $G$}\rangle. \end{multline*} Finally $Flag(G,S)$ is simply connected if and only if it is the universal cover of the quotient, so if and only if $\pi_1(Flag(G,S)/G) = G$.
\end{proof}

We call a presentation satisfying the conditions of Proposition \ref{triangular} a \textit{triangular} presentation. 

\begin{Proposition}\label{free}
 Assume a group $G$ has a triangular presentation $\langle S\mid R\rangle$. Assume additionally $R = \{a\cdot b\cdot c^{-1}\mid a, b ,c\in S \text{ with } abc^{-1} =e \text{ in } G\}$. Then the action of $G$ on $Flag(G,S)$ is free.
\end{Proposition}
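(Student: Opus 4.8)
The plan is to show that no nontrivial element of $G$ stabilizes a simplex of $Flag(G,S)$ setwise; this is exactly what freeness of a simplicial action amounts to, since a simplicial automorphism fixes a point of the geometric realization precisely when it stabilizes the (unique) carrier simplex of that point, and conversely the barycenter of a stabilized simplex is a fixed point. First I would record that the action on vertices is already free: $G$ acts by $h\cdot v[g] = v[hg]$, and $v[hg]=v[g]$ forces $h=e$. Hence any $h$ stabilizing a simplex must permute its vertices nontrivially, and it suffices to rule this out.

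The idea is to use the natural orientation of the Cayley graph to rigidify each simplex. Each edge $e[g,s]$ carries a direction, from $v[g]$ to $v[gs]$; equivalently, there is an oriented edge $v[g']\to v[g'']$ precisely when $(g')^{-1}g''\in S$. The key point is that left translation preserves this orientation: the image of $e[g,s]$ under $h$ is $e[hg,s]$, which again points forward. Thus $G$ acts on $Flag(G,S)$ by orientation-preserving simplicial automorphisms.

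Now I would exploit the hypothesis $R = \{a\cdot b\cdot c^{-1}\mid abc^{-1}=e\}$. Since $R$ is the full set of triangular relators prescribed by Proposition \ref{triangular}, the absence of relators of the form $a\cdot b\cdot c$ means there is no triple $a,b,c\in S$ with $abc=e$ in $G$. Translated into the Cayley graph, this says there is no directed $3$-cycle: a directed triangle $v[g]\to v[ga]\to v[gab]\to v[g]$ would require $a,b\in S$ together with $(gab)^{-1}g = b^{-1}a^{-1}\in S$, i.e.\ a relator $a\cdot b\cdot c=e$ with $c=b^{-1}a^{-1}$. Consequently every $2$-simplex of $Flag(G,S)$ induces a transitive tournament on its three vertices. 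Since a tournament all of whose triangles are transitive is itself transitive, the orientation restricts on the vertex set of any simplex to a total order, say $v[g_i]\prec v[g_j]$ whenever $g_i^{-1}g_j\in S$.

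Finally, if $h$ stabilizes a simplex $\sigma$, then because $h$ preserves the orientation it preserves this total order on the vertices of $\sigma$; an order-preserving bijection of a finite totally ordered set is the identity, so $h$ fixes every vertex of $\sigma$, whence $h=e$ by freeness on vertices. The step I expect to require the most care is making the tournament-to-total-order argument precise: I must check that pairwise adjacency in the flag complex genuinely supplies a tournament, i.e.\ that each pair of vertices of a simplex is joined by \emph{exactly} one oriented edge (here $S\cap S^{-1}=\emptyset$ is what excludes both orientations occurring simultaneously), and that transitivity of every triangle, which is exactly the no-directed-$3$-cycle condition secured above, forces global transitivity on each clique.
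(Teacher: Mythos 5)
Your proof is correct and follows essentially the same route as the paper: both use that left translation preserves the edge orientation and that the restriction to relators of the form $a\cdot b\cdot c^{-1}$ rules out directed triangles, so the orientation rigidifies the vertex set of any stabilized simplex and forces the stabilizing element to fix a vertex, whence it is trivial by freeness on vertices. The only cosmetic difference is that you make the rigidification explicit as a total order (transitive tournament) on the vertices of the simplex, whereas the paper simply extracts the unique sink $v_0$ — which is the maximum of your order.
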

\begin{proof} 
 Let $g\in G$ and $x\in Flag(G,S)$ such that $g\cdot x= x$. Let $V$ be the set of vertices of the smallest simplex containing $x$. As the action of $G$ is simplicial, $g\cdot V = V$. 
 The restriction of possible relations in $R$ imposes an orientation on triangles in the graph, which in turn implies that in the subcomplex $V$ there exists a unique vertex $v_0\in V$ with only incoming edges i.e. $\exists v_0\in V$ such that $\forall w\in V\setminus\{v_0\},\ \exists k\in S$ such that $v_0=wk$. Since $S\cap S^{-1} = \emptyset$, the action of $G$ on $\Gamma(G,S)$ preserves the orientation of the edges and is free on the vertices. So $g\cdot x = x$ implies $g\cdot v_0 = v_0$ hence $g = e$.
\end{proof}

 \section{Systolic Cayley Complexes}
Consider a group $G$ with a finite triangular presentation $G = \langle S\mid R\rangle$. Then we know that $Flag(G,S)$ is a simply connected simplicial complex. We now want to know when $Flag(G,S)$ is systolic. We already know that $Flag(G,S)$ is simply connected. As it is flag, all the links of vertices are flag. So we need to check whether Lk$(v, Flag(G,S))$ is 6-large for all vertices of $Flag(G,S)$. As the action of $G$ on $Flag(G,S)$ is transitive and by isometries on the vertices, we only need to check if Lk$(e, Flag(G,S))$ is 6-large. 
For simpler notation we set $L = $ Lk$(e, Flag(G,S))$. Then the vertices of $L$ are $V(L) =  \{v[s]\mid s\in S\cup S^{-1}\}$. As we differentiate between elements in $S$ and elements in $S^{-1}$, we will call vertices $v[s]$ with $s\in S$ \textit{positive} and vertices $v[s]$ with $s\in S^{-1}$ \textit{negative}. Edges between these vertices are labeled by elements in $S$ and $E(L) = \{e[g,a]\mid g\in S\cup S^{-1} \text{ and } a\in S\setminus\{g^{-1}\} \text{ and } ga\in S\cup S^{-1}\}$, where $e[g,a]$ is the edge going from $v[g]$ to $v[ga]$. We write $v\sim w$ for two adjacent vertices $v$ and $w$ and $e = (v,w)$ for the edge $e$ between $v$ and $w$. If there is an edge from $v$ to $w$ we might also use the notation $v\rightarrow w$. To simplify the notation, we will also denote the vertex $v[s]$ with $s$ and say the vertex $s\in S$ is positive, $s\in S^{-1}$ is negative. 

We put some additional conditions on $S$ and $R$:
\begin{enumerate}
 \item If for some $a, b, c\in S$, $abc\in S$ then $ab \in S$ and $bc\in S$. 
 \item $R = \{a\cdot b\cdot c^{-1}\mid abc^{-1} =e \text{ in } G\}$, so we do not have relations of the form $a\cdot b\cdot c$ in $R$.
\end{enumerate}
We call a triangular presentation satisfying these additional conditions a \textit{restricted triangular presentation}. 
These conditions are mostly technical. They limit the possible diagonal free cycles in $L$ and by Proposition \ref{free} ensure that the action of $G$ on $Flag(G,S)$ is free. We don't know how to decide whether $L$ is 6-large without them. So what can we say about diagonal free cycles in $L$ of length 4 or 5 under these conditions?



\begin{Bemerkung}\label{exceptions}
The additional condition on $R$ implies that there are no edges from vertices in $S$ to vertices in $S^{-1}$, so from positive to negative vertices. The additional conditions on $S$ imply that every cycle $\gamma$ in $L$ which contains one of the following configurations of adjacent vertices has a diagonal. 
So if $a, b, c\in S$ with
\begin{enumerate}[label = \alph*)]
 \item $a, b, c\in V(\gamma)$ and $a\rightarrow b \rightarrow c$ then $a\rightarrow c$.
 \item $a^{-1}, b^{-1}, c^{-1}\in V(\gamma)$ and $a^{-1}\rightarrow b^{-1}\rightarrow c^{-1}$ then $a^{-1}\rightarrow c^{-1}$.
 \item $a^{-1}, b, c\in V(\gamma)$ and $a^{-1}\rightarrow b\leftarrow c$ then $a^{-1}\rightarrow c$.
 \item $a^{-1}, b^{-1},c \in V(\gamma)$ and $a^{-1}\leftarrow b^{-1}\rightarrow c$ then $a^{-1}\rightarrow c$.
\end{enumerate}
So all of these configurations of vertices cannot occur in diagonal free cycles. This leads us to the following statement about potential cycles of length 4 or 5.
\end{Bemerkung}

\begin{Lemma}\label{5-cycle}
 Every cycle of length 5 in $L$ contains a diagonal.
\end{Lemma}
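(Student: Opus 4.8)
The plan is to assume, for contradiction, that there is an embedded diagonal-free $5$-cycle $\gamma = (v_0, v_1, v_2, v_3, v_4)$ in $L$ and to derive a contradiction purely from the orientation data. First I would record the two structural inputs we are allowed to use. By the additional condition on $R$ there is no edge from a positive vertex to a negative one, so whenever two adjacent vertices have opposite signs the edge between them is forced to point from the negative vertex to the positive one. Secondly, Remark~\ref{exceptions} says that in a diagonal-free cycle none of the configurations a)--d) may occur; I would reformulate these as statements about a middle vertex $y$ and its two cycle-neighbours: a) and b) forbid a monochromatic \emph{through} vertex (one incoming and one outgoing edge, with both neighbours of the same sign as $y$); c) forbids a positive \emph{sink} whose two sources are one negative and one positive; and d) forbids a negative \emph{source} whose two targets are one negative and one positive.

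Next I would classify $\gamma$ by the number of sign changes around the cycle. Since $\gamma$ is a cycle this number is even, hence $0$, $2$ or $4$. The monochromatic case ($0$ sign changes) I would dispose of immediately: if every vertex is positive, configuration a) forbids any through vertex, so each vertex is a source or a sink; but in a cycle sources and sinks must alternate, forcing even length and contradicting $|\gamma| = 5$. The all-negative case is identical using b).

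The heart of the argument — and the step I expect to be the main obstacle — is the following parity statement for the cases with a sign change: every maximal monochromatic arc is then flanked on both sides by vertices of the opposite sign, and such an arc must have odd length. For a positive arc $(p_1,\dots,p_k)$ the flanking edges point inward (into $p_1$ and into $p_k$); to avoid configuration c) at each endpoint, the first interior edge must leave $p_1$ and the last must leave $p_k$, and then avoiding configuration a) along the arc forces the interior edge-directions to alternate, which is consistent only when $k$ is odd (in particular $k=2$ is impossible, since both end-conditions would demand opposite orientations of the single interior edge). A symmetric analysis for a negative arc $(n_1,\dots,n_m)$ — where the flanking edges now point outward, configuration d) plays the role of c), and configuration b) plays the role of a) — shows that $m$ is odd as well. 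Carrying out this case analysis carefully, keeping track of the forced directions at the two ends and of the alternation in the interior, is where the real work lies.

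Finally I would conclude by a parity count. With exactly two sign changes $\gamma$ consists of one positive and one negative arc, and with four sign changes of two positive and two negative arcs; in either case the arc lengths are all odd and there is an even number of them, so their total is even. This contradicts $|\gamma| = 5$. Hence no diagonal-free $5$-cycle exists, i.e. every $5$-cycle in $L$ has a diagonal.
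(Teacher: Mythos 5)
Your proof is correct, and it reorganizes the argument in a genuinely different way from the paper. Both proofs rest on the same foundation, namely the forbidden configurations a)--d) of Remark~\ref{exceptions} together with the fact that edges between vertices of opposite sign must point from the negative to the positive vertex. But the paper proceeds by a direct case analysis on the number of negative vertices of the $5$-cycle ($0$, $1$, or $2$, the remaining cases following by the positive/negative symmetry), asserting in each case that every orientation of the undetermined edges produces one of the forbidden configurations; the verification of that assertion is left to the reader. You instead decompose the cycle into maximal monochromatic arcs and prove a parity lemma: any such arc flanked by opposite-sign vertices is forced, by c) (resp.\ d)) at its endpoints and by a) (resp.\ b)) in its interior, to have alternating interior edge directions compatible only with odd length, after which the conclusion is a two-line parity count (an even number of odd arc lengths cannot sum to $5$). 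Your route buys two things: it makes fully explicit the ``forced direction'' reasoning that the paper compresses into parenthetical remarks, and it proves strictly more --- your argument shows that \emph{every} diagonal-free embedded cycle in $L$ has even length, i.e.\ every odd cycle has a diagonal, not just those of length $5$. The paper's case check is shorter for the single length it needs, but your version is the one that would survive a change of the constant.
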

\begin{proof}
Let $\gamma$ be a cycle of length 5 in $L$.

 As 5 is odd, if $\gamma$ has 5 positive or 5 negative vertices, it has a diagonal (situation a) or b) always occurs).
 
 Assume $\gamma$ contains one negative and four positive vertices. As there are only edges from negative to positive vertices, the direction of two of the edges in $\gamma$ is already determined. Every possible direction of the three other edges leads to one of the situations above (avoiding situation c) necessarily leads to situation a)). The same argument holds if $\gamma$ has one positive and four negative vertices.
 
 Assume $\gamma$ has two negative and 3 positive vertices. As each negative vertex is adjacent to at least one positive vertex, the direction of at least two edges of $\gamma$ is already determined. Every possible direction of the three other edges leads to one of the situations above (if the two negative vertices are adjacent we have situation d) otherwise situation c)). The same argument holds if $\gamma$ has two positive and three negative vertices.
\end{proof}

\begin{Lemma}\label{4-cycle}
 Let $u, v, w, x, a, b, c, d\in S$. The only cycles of length 4 in $L$ that do not contain one of the situations mentioned in Remark \ref{exceptions} are: 
 
\begin{center}
\begin{tabular}{c c c c c}
1) & 2) & 3)& 4) & 5) \\
\begin{tikzpicture}

 \draw[fill] (0,0) circle [radius=0.05];
 \draw[fill] (0.9,0) circle [radius=0.05];
 \draw[fill] (0,0.9) circle [radius=0.05];
 \draw[fill] (0.9,0.9) circle [radius=0.05];
 \draw[->-, >=latex](0,0)node[left]{$u$}--node[midway, below]{$d$}(0.9,0)node[right]{$x$};
 \draw[->-, >=latex](0.9,0.9)node[right]{$w$}--node[midway, right]{$c$}(0.9,0);
 \draw[->-, >=latex](0,0)--node[midway, left]{$a$}(0,0.9)node[left]{$v$};
 \draw[->-, >=latex] (0.9,0.9)--node[midway, above]{$b$}(0,0.9);
 \end{tikzpicture}
 &
 \begin{tikzpicture}
 \draw[fill] (2.4,0) circle [radius=0.05];
 \draw[fill] (3.3,0) circle [radius=0.05];
 \draw[fill] (2.4,0.9) circle [radius=0.05];
 \draw[fill] (3.3,0.9) circle [radius=0.05];
 \draw[->-, >=latex](2.4,0)node[left]{$u^{-1}$}--node[midway, below]{$d$}(3.3,0)node[right]{$x^{-1}$};
 \draw[->-, >=latex](3.3,0.9)node[right]{$w^{-1}$}--node[midway, right]{$c$}(3.3,0);
 \draw[->-, >=latex](2.4,0)--node[midway, left]{$a$}(2.4,0.9)node[left]{$v^{-1}$};
 \draw[->-, >=latex] (3.3,0.9)--node[midway, above]{$b$}(2.4,0.9);
 \end{tikzpicture}
 &
 \begin{tikzpicture}
 \draw[fill] (5.3,0) circle [radius=0.05];
 \draw[fill] (6.2,0) circle [radius=0.05];
 \draw[fill] (5.3,0.9) circle [radius=0.05];
 \draw[fill] (6.2,0.9) circle [radius=0.05];
 \draw[->-, >=latex](5.3,0)node[left]{$u^{-1}$}--node[midway, below]{$d$}(6.2,0)node[right]{$x$};
 \draw[->-, >=latex](6.2,0)--node[midway, right]{$c$}(6.2,0.9)node[right]{$w$};
 \draw[->-, >=latex](5.3,0)--node[midway, left]{$a$}(5.3,0.9)node[left]{$v$};
 \draw[->-, >=latex] (5.3,0.9)--node[midway, above]{$b$}(6.2,0.9);
 \end{tikzpicture}
 &
 \begin{tikzpicture}
 \draw[fill] (7.9,0) circle [radius=0.05];
 \draw[fill] (8.8,0) circle [radius=0.05];
 \draw[fill] (7.9,0.9) circle [radius=0.05];
 \draw[fill] (8.8,0.9) circle [radius=0.05];
 \draw[->-, >=latex](7.9,0)node[left]{$u^{-1}$}--node[midway, below]{$d$}(8.8,0)node[right]{$x^{-1}$};
 \draw[->-, >=latex](8.8,0)--node[midway, right]{$c$}(8.8,0.9)node[right]{$w$};
 \draw[->-, >=latex](7.9,0)--node[midway, left]{$a$}(7.9,0.9)node[left]{$v^{-1}$};
 \draw[->-, >=latex] (7.9,0.9)--node[midway, above]{$b$}(8.8,0.9);
 \end{tikzpicture}
 &
 \begin{tikzpicture}
 \draw[fill] (10.9,0) circle [radius=0.05];
 \draw[fill] (11.8,0) circle [radius=0.05];
 \draw[fill] (10.9,0.9) circle [radius=0.05];
 \draw[fill] (11.8,0.9) circle [radius=0.05];
 \draw[->-, >=latex](10.9,0)node[left]{$u^{-1}$}--node[midway, below]{$d$}(11.8,0)node[right]{$x$};
 \draw[->-, >=latex](11.8,0.9)node[right]{$w^{-1}$}--node[midway, right]{$c$}(11.8,0);
 \draw[->-, >=latex](10.9,0)--node[midway, left]{$a$}(10.9,0.9)node[left]{$v$};
 \draw[->-, >=latex] (11.8,0.9)--node[midway, above]{$b$}(10.9,0.9);
 \end{tikzpicture}
 \\
 \end{tabular}

\end{center}

\end{Lemma}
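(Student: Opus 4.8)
The plan is to run a case analysis on the number of negative vertices of a diagonal-free $4$-cycle $\gamma$, exploiting the fact recorded at the start of Remark \ref{exceptions} that there are no edges from positive to negative vertices. Concretely, every edge of $L$ is of exactly one of three types — positive$\to$positive, negative$\to$positive, or negative$\to$negative — and never positive$\to$negative. In particular, at a negative vertex adjacent to a positive vertex both incident edges are forced to point outward, and at a positive vertex adjacent to a negative one both incident edges are forced to point inward. Since $\gamma$ has four vertices, it suffices to treat the cases of $0,1,2,3,4$ negative vertices and, in each, to determine which orientations of the still-undetermined same-sign edges avoid all of the configurations a)--d) of Remark \ref{exceptions}.

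First I would dispose of the two pure cases. If all four vertices are positive, then avoiding configuration a) — a directed path of three positive vertices — forces the orientation to alternate source/sink around the cycle, which is exactly picture 1); dually, four negative vertices together with configuration b) give picture 2). Next I would handle the mixed cases with a single vertex of the minority sign. With one negative vertex $n$ (necessarily a source) and three positive vertices $p_1,p_2,p_3$, where $n$ is adjacent to $p_1$ and $p_3$, configuration a) forces the middle vertex $p_2$ to be a source or a sink, while configuration c) applied to $n\to p_1$ and to $n\to p_3$ rules out $p_2\to p_1$ and $p_2\to p_3$; hence $p_2$ is a sink and the orientation is uniquely $n\to p_1\to p_2\leftarrow p_3\leftarrow n$, which is picture 3). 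The symmetric argument, using b) and d) in place of a) and c), gives the unique orientation of picture 4) when there are three negative and one positive vertex.

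The crux is the balanced case of two negative and two positive vertices, which I would split according to whether the two negative vertices are adjacent in $\gamma$. If they are adjacent, then so are the two positive ones, and both negative vertices already carry an outgoing edge to a positive neighbour; whichever way the remaining negative--negative edge is oriented, its tail becomes a negative vertex with an outgoing edge to a negative vertex and an outgoing edge to a positive vertex, i.e. configuration d) occurs and $\gamma$ has a diagonal. Hence the negatives must be non-adjacent, giving an alternating cycle in which both negative vertices are sources and both positive vertices are sinks; here there are no same-sign edges at all, so no configuration can arise, and this is picture 5). Finally, in each surviving case I would translate the four edge labels back into the group, reading an edge $v[g]\to v[g']$ with label $k\in S$ as the relation $gk=g'$, to confirm that the five pictures carry precisely the stated relations among $u,v,w,x,a,b,c,d$. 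I expect the adjacent-negatives subcase to be the main obstacle, since it is the only place where an a priori admissible sign pattern is eliminated entirely, and it requires checking that configuration d) is unavoidable for both orientations of the undetermined edge.
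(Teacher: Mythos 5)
Your proposal is correct and follows essentially the same route as the paper's proof: a case analysis on the number of negative vertices, using the absence of positive-to-negative edges to force edge orientations and the configurations a)--d) of Remark \ref{exceptions} to eliminate all orientations except the five pictured ones. The only difference is that you spell out some steps the paper leaves terse (notably that both orientations of the negative--negative edge in the adjacent-negatives subcase produce configuration d)), which is a welcome but not substantively different elaboration.
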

\begin{proof}
 Let $\gamma$ be a cycle of length 4.
 
 Assume $\gamma$ has 4 positive vertices. In order not to be in situation Remark \ref{exceptions} a) each vertex has either two incoming or two outgoing vertices. This corresponds to the first cycle. The same argument holds if $\gamma$ has 4 negative vertices. Then we have the second cycle.
 
 Assume $\gamma$ has 1 negative and 3 positive vertices. Then the negative vertex is adjacent to two positive vertices and hence the direction of two edges is already determined. As we do not allow the configuration Remark \ref{exceptions} c) the only possible cycle is the third cycle. The same argument holds for 1 positive and 3 negative vertices, which gives the fourth cycle.
 
 Assume $\gamma$ has 2 positive and 2 negative vertices. Then if the two negative vertices are adjacent we are necessarily in situation Remark \ref{exceptions} d). So the negative vertices are not adjacent. Hence they are both adjacent to the two positive vertices and the direction of these edges is determined. This gives the fifth cycle.
\end{proof}


So to see if $L$ is 6-large we need to concentrate on the cycles of length 4 presented in Lemma \ref{4-cycle}. When do they exist? Under which conditions do they have a diagonal? The next lemma aims to answer those questions.





\begin{Lemma}\label{diagonals} The link $L$ is 6-large if and only if the following additional conditions on $S$ are satisfied:
 \begin{enumerate}[label=\arabic*)]
  \item\label{first condition} If $\exists u, w, a, b, c, d\in S$, $u\neq w$, $a\neq d$, with $ua=wb\in S$ and $ud=wc\in S$ then $\exists k\in S$ such that $w=uk$ or $ua = udk$.
  \item\label{second condition} If $\exists v,x, a, b, c, d\in S$, $v\neq x$, $a\neq b$, with $bv = cx\in S$ and $av=dx\in S$ then $\exists k\in S$ such that $v= kx$ or $ av = kbv$.
  \item\label{third condition} If $\exists u, v, x, b, c\in S$, $v\neq x$,  with $ux\in S$, $uv \in S$ and $vb= xc\in S$ then $\exists k\in S$ such that $k = uvb$ or $v = xk$.
  \item\label{fourth condition} If $\exists v, w, x, a, d\in S$, $v\neq x$, with $vw\in S$, $xw\in S$ and $dx= av\in S$ then $\exists k\in S$ such that $k = avw$ or $x=kv$.
  \item\label{fifth condition} If $\exists u, v, w, x\in S$, $v\neq x$, $u\neq w$, with $wv\in S$, $wx\in S$, $uv\in S$ and $ux\in S$ then $\exists k\in S$ such that $w=ku$ or $x = vk$.
 \end{enumerate}

\end{Lemma}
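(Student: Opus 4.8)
The plan is to reduce the $6$-largeness of $L$ to a statement about the five four-cycles isolated in Lemma \ref{4-cycle}, and then to translate the existence of those cycles and of their diagonals into the algebraic conditions \ref{first condition})--\ref{fifth condition}). Since $L$ is a link in the flag complex $Flag(G,S)$ it is itself flag, so $6$-largeness only requires that every embedded $4$- or $5$-cycle have a diagonal. By Lemma \ref{5-cycle} every $5$-cycle already has one, and by Remark \ref{exceptions} every $4$-cycle containing one of the configurations a)--d) has one as well. Hence $L$ is $6$-large if and only if each of the five remaining cycle types of Lemma \ref{4-cycle}, whenever it occurs in $L$, possesses a diagonal; this is what I would match against \ref{first condition})--\ref{fifth condition}).

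Next I would set up a dictionary between the combinatorics of $L$ and products in $S$. Recall that an edge of $L$ carries a label in $S$ and runs from $v[g]$ to $v[gk]$ with $k\in S$ and $gk\in S\cup S^{-1}$, and that by the restricted condition on $R$ there is no edge from a positive to a negative vertex. Reading off the four edges of cycle $i$ gives exactly the product hypotheses in condition $i$: for instance cycle 1) has all vertices positive with $ua=wb$ and $ud=wc$ in $S$, cycle 2) has all vertices negative and, after inverting, yields $av=dx$ and $bv=cx$ in $S$, cycles 3) and 4) are the mixed cases feeding conditions \ref{third condition}) and \ref{fourth condition}), and cycle 5) gives $uv,ux,wv,wx\in S$. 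I would also check that the four vertices are always distinct, using $u\neq w$, $v\neq x$, $a\neq d$ (respectively $a\neq b$) together with $e\notin S$, so that each cycle is genuinely embedded and its diagonals join the two pairs of non-consecutive vertices.

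With the dictionary in place the two implications become a finite verification, cycle by cycle. Sufficiency is immediate: condition $i$ furnishes a $k\in S$ realizing precisely one of the two diagonals of cycle $i$ (e.g.\ in cycle 1) the relation $w=uk$ is the diagonal $u\to w$, and $ua=udk$ is the diagonal $x\to v$), so the cycle has a diagonal and $L$ is $6$-large. For necessity, assuming $6$-largeness, the cycle attached to the hypotheses of condition $i$ is present and embedded, hence has a diagonal, and I must show this diagonal is one of the two recorded relations. For a \emph{mixed} pair of non-consecutive vertices, as in the diagonal $(u^{-1},w)$ of cycles 3) and 4), the no-positive-to-negative rule forces the orientation of the joining edge, so only the stated form $k=uvb$, respectively $k=avw$, can occur.

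The main obstacle is the \emph{same-sign} diagonals: the pairs of two positive or two negative vertices occurring in cycles 1), 2), 5) and in one pair of each of 3) and 4). Here both orientations of the joining edge are a priori geometrically possible, whereas each condition records only one of them. The two facts I would push through are that $S\cap S^{-1}=\emptyset$ allows at most one of the mutually inverse candidates $s^{-1}t$, $t^{-1}s$ to lie in $S$, and that the defining relations of the cycle, together with cancellation in $G$ and the restricted-triangular condition in the form $pqr\in S\Rightarrow pq,qr\in S$, let me rewrite the admissible diagonal as exactly the $k$ appearing in condition $i$ and rule out any further edge between non-consecutive vertices. Carrying out this rewriting uniformly across the five cycles is where the bulk of the work lies; once it is done, the equivalence of $6$-largeness with \ref{first condition})--\ref{fifth condition}) follows.
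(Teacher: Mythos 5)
Your overall skeleton coincides with the paper's: reduce $6$-largeness to the five $4$-cycles of Lemma \ref{4-cycle} via Lemma \ref{5-cycle} and Remark \ref{exceptions}, translate vertices and edges of each cycle into products in $S$, note that sufficiency is immediate, and then treat necessity cycle by cycle. Your dictionary, your distinctness check, and your treatment of the mixed diagonals of cycles 3) and 4) (where the no-positive-to-negative rule forces the orientation, giving $k=uvb$ resp.\ $k=avw$) all match the paper's proof.

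The gap is in your final paragraph, the step you yourself flag as the bulk of the work. For a same-sign diagonal, say between $u$ and $w$ in cycle 1), the two orientations correspond to $u^{-1}w\in S$ (i.e.\ $w=uk$) and $w^{-1}u\in S$ (i.e.\ $u=wk$); since $S\cap S^{-1}=\emptyset$ these are mutually exclusive, not interconvertible. Consequently no amount of cancellation in $G$ or appeal to $pqr\in S\Rightarrow pq,qr\in S$ can ``rewrite'' a diagonal of the form $u=wk$ into one of the two relations $w=uk$ or $ua=udk$ recorded in condition \ref{first condition}), and the unrecorded orientation cannot be ruled out either: take any instance $(u,w,a,b,c,d)$ of the hypothesis in which $w=uk$ holds and rename it as $(w,u,b,a,d,c)$. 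This is again an instance of the hypothesis (note $b\neq c$ follows from $a\neq d$ by cancellation), the only diagonal of its cycle now reads ``$u=wk$'' in the new labels, and in a free-group model as in Lemma \ref{free counterexample} neither of the two stated relations holds for this relabeled instance. So the instance-by-instance derivation you propose for necessity fails, and it fails for a structural reason, not for lack of effort. What the paper does instead --- and what its phrase ``using the symmetries of the 4-cycle'' is actually doing --- is exactly this relabeling: the hypothesis of each condition is invariant under swapping the two vertices with outgoing edges (together with $a\leftrightarrow b$, $c\leftrightarrow d$) and under swapping the two vertices with incoming edges (together with $a\leftrightarrow d$, $b\leftrightarrow c$); these relabelings permute the four possible diagonal relations $w=uk$, $u=wk$, $v=xk$, $x=vk$, and the two relations listed in the condition are representatives of the two orbits. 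The equivalence between ``every such cycle has a diagonal'' and condition $i$) must therefore be read across all labelings of a geometric cycle simultaneously, rather than established one labeled instance at a time by algebraic manipulation; carried out as you describe, your plan stalls precisely at this point.
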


Note that $ u, v, w, x\in S$ correspond to vertices in 4-cycles in $L$ and $a, b, c, d\in S$ to edges. Also note that these conditions are all necessary as one can see in Lemma \ref{free counterexample}.
\begin{proof} We know by Lemma \ref{5-cycle} that there are no diagonal free cycles of length 5 in $L$. By Lemma \ref{4-cycle}, we know that there are only five problematic cycles of length 4. We show here under which conditions on $S$ such cycles exists and which conditions are necessary for the existence of a diagonal. If those five 4-cycles have a diagonal, all cycles of length 4 have a diagonal. So all cycles of length $<6$ have a diagonal, so $L$ is 6-large. The existence of the 4-cycle relies on two elements: we need 4 distinct vertices $u, v, w, x$ and we need the appropriate edges $a, b, c, d$ between these vertices.

\begin{enumerate}[label=\roman*)]
\item In the first cycle of length 4: the existence of the cycle is equivalent to the following statement about elements of $S$: \begin{multline*}\exists\ a, b, c, d, u, v, w, x\in S, u,v, w, x \text{ pairwise distinct}\\\text{ such that } v = ua = wb \text{ and } x = ud=wc\end{multline*}
where $u, v, w, x\in S$ are the labels on the vertices and $a, b, c, d$ are the labels on the edges. There is a diagonal if $v\sim x$ or $u\sim w$ which is equivalent to $$\exists\ k\in S: w = uk, u = wk, v = xk \text{ or } x = vk$$
where $k\in S$ is the label on the diagonal. Using the symmetries of the 4-cycle, this corresponds to condition \ref{first condition}. 

\item In the second cycle of length 4: the existence of the cycle is equivalent to the following statement about elements of $S$: \begin{multline*}\exists\ a, b, c, d, u, v, w, x\in S,u^{-1},v^{-1}, w^{-1}, x^{-1} \text{ pairwise distinct}\\\text{ such that } v^{-1}=u^{-1}a=w^{-1}b\text{ and }x^{-1} = w^{-1}c = u^{-1}d\end{multline*}where $a, b, c, d\in S$ are the labels on the edges. There is a diagonal if \linebreak $v^{-1}\sim x^{-1}$ or $u^{-1}\sim w^{-1}$ which is equivalent to $$\exists\ k\in S: w^{-1} = u^{-1}k, u^{-1} = w^{-1}k, v^{-1} = x^{-1}k \text{ or } x^{-1} = v^{-1}k$$
where $k\in S$ is the label on the diagonal. Using the symmetries of the 4-cycle this corresponds to  condition \ref{second condition}.

\item In the third cycle of length 4: the existence of the cycle is equivalent to the following statement about elements of $S$: \begin{multline*}\exists\ a, b, c, d, u, v, w, x\in S , u^{-1}, v, w, x \text{ pairwise distinct}\\\text{ such that }v = u^{-1}a, x = u^{-1}d, w = vb \text{ and }w =xc\end{multline*}where $a, b, c, d\in S$ are the labels on the edges. There is a diagonal if $v\sim x$ or $u^{-1}\sim w$ which is equivalent to $$\exists k \in S : uw = k, v = xk \text{ or } x = vk$$where $k\in S$ is the label on the diagonal. As $u^{-1}$ is a negative vertex and $w$ is a positive one, there is only one possible direction for the diagonal from $u^{-1}$ to $w$. Using the symmetry of the 4-cycle 
 this corresponds to condition \ref{third condition}.

\item In the fourth cycle of length 4: the existence of the cycle is equivalent to the following statement about elements of $S$: \begin{multline*}\exists\ a, b, c, d, u, v, w, x\in S,u^{-1},v^{-1},w, x^{-1} \text{ pairwise distinct}\\\text{ such that } v^{-1} = u^{-1}a, w = v^{-1}b= x^{-1}c \text{ and } x^{-1} = u^{-1}d\end{multline*}where $a, b, c, d\in S$ are the labels on the edges. There is a diagonal if \linebreak $v^{-1}\sim x^{-1}$ or $u^{-1}\sim w$ which is equivalent to $$\exists k \in S : uw = k, v^{-1} = x^{-1}k \text{ or } x^{-1} = v^{-1}k$$where $k\in S$ is the label on the diagonal. As $u^{-1}$ is a negative vertex and $w$ is a positive one, there is only one possible direction for the diagonal from $u^{-1}$ to $w$. Using the symmetry of the 4-cycle 
 this corresponds to condition \ref{fourth condition}.

\item In the fifth cycle of length 4: the existence of the cycle is equivalent to the following statement about elements of $S$: \begin{multline*}\exists\ a, b, c, d, u, v, w, x\in S, u^{-1},v, w^{-1}, x\text{ pairwise distinct}\\ \text{such that } v=u^{-1}a=w^{-1}b \text{ and } x = u^{-1}d = w^{-1}c\end{multline*} where $a, b, c, d\in S$ are the labels on the edges. There is a diagonal if $v\sim x$ or $u^{-1}\sim w^{-1}$ which is equivalent to $$\exists\ k\in S: w^{-1} = u^{-1}k, u^{-1} = w^{-1}k, v = xk \text{ or } x = vk$$
where $k\in S$ is the label on the diagonal. Using the symmetries of the 4-cycle 
this corresponds to condition \ref{fifth condition}.
\end{enumerate}
\end{proof}


We can now get back to the original question: when is $Flag(G,S)$ systolic?

\begin{Theorem}\label{systolic}
 Consider a group $G$ with generating set $S$, where $G$ has a finite restricted triangular presentation with respect to $S$. Then the complex $Flag(G,S)$ is a simply connected simplicial complex. It is systolic if and only if the generating set $S$ satisfies the following conditions: 
 \begin{enumerate}[label=\arabic*)]
  \item If $\exists u, w, a, b, c, d\in S$, $u\neq w$, $a\neq d$, with $ua=wb\in S$ and $ud=wc\in S$ then $\exists k\in S$ such that $w=uk$ or $ua = udk$.
  \item If $\exists v,x, a, b, c, d\in S$, $v\neq x$, $a\neq b$, with $bv = cx\in S$ and $av=dx\in S$ then $\exists k\in S$ such that $v= kx$ or $ av = kbv$.
  \item If $\exists u, v, x, b, c\in S$, $v\neq x$,  with $ux\in S$, $uv \in S$ and $vb= xc\in S$ then $\exists k\in S$ such that $k = uvb$ or $v = xk$.
  \item If $\exists v, w, x, a, d\in S$, $v\neq x$, with $vw\in S$, $xw\in S$ and $dx= av\in S$ then $\exists k\in S$ such that $k = avw$ or $x=kv$.
  \item If $\exists u, v, w, x\in S$, $v\neq x$, $u\neq w$, with $wv\in S$, $wx\in S$, $uv\in S$ and $ux\in S$ then $\exists k\in S$ such that $w=ku$ or $x = vk$.
 \end{enumerate}
 Moreover, this implies that $G$ is a systolic group.
\end{Theorem}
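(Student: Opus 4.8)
The plan is to assemble the statement from the structural results already established, since the substantive combinatorial work resides in Lemmas \ref{5-cycle}, \ref{4-cycle} and \ref{diagonals}, leaving only a bookkeeping argument here. First I would record that a restricted triangular presentation is, in particular, a triangular presentation with $S\cap S^{-1}=\emptyset$. Proposition \ref{triangular} then immediately yields that $Flag(G,S)$ is a simply connected simplicial complex, which settles the first sentence of the theorem.

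Next I would unpack the definition of systolicity for $Flag(G,S)$. It is connected, being the flag complex of a Cayley graph on a generating set, and it is simply connected by the previous step. Because $Flag(G,S)$ is by construction a flag complex, the link of every vertex is again flag, so the only remaining requirement is that $\mathrm{Lk}(v, Flag(G,S))$ be $6$-large for every vertex $v$. Here I would invoke the transitivity of the left $G$-action on vertices: since $G$ acts by simplicial isometries, sending $v[g]$ to $v[hg]$, every vertex link is isomorphic to $L=\mathrm{Lk}(e, Flag(G,S))$. Hence it suffices that $L$ be $6$-large, and Lemma \ref{diagonals} states precisely that $L$ is $6$-large if and only if conditions 1)--5) hold. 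As these are exactly the conditions listed in the theorem, this establishes the claimed equivalence.

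For the final assertion, I would check that, when conditions 1)--5) hold, the $G$-action on the now-systolic complex $Flag(G,S)$ witnesses systolicity of $G$. The action is by isometries and is cocompact because $S$ is finite, so there are finitely many $G$-orbits of simplices. The restricted condition $R=\{a\cdot b\cdot c^{-1}\mid abc^{-1}=e \text{ in }G\}$ lets me apply Proposition \ref{free}, so the action is free; combined with local finiteness of the complex (again from finiteness of $S$), freeness gives proper discontinuity. Therefore $G$ acts properly discontinuously and cocompactly on a systolic complex and is a systolic group.

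I expect the genuine content to lie entirely in the cited lemmas; the only points requiring care in this proof are the reduction from ``all vertex links are $6$-large'' to ``the single link $L$ is $6$-large'', which rests on vertex-transitivity of the action, and the observation that flagness of $Flag(G,S)$ automatically delivers the flagness of links demanded by the definition, so that no separate verification of that hypothesis is needed.
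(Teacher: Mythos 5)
Your proposal is correct and follows essentially the same route as the paper's proof: Proposition \ref{triangular} for simple connectedness, flagness of links from flagness of $Flag(G,S)$, vertex-transitivity of the $G$-action to reduce to the single link $L$, and Lemma \ref{diagonals} for the equivalence with conditions 1)--5). The only cosmetic difference is that you justify proper discontinuity via Proposition \ref{free} and local finiteness, whereas the paper simply cites the properly discontinuous, cocompact action already noted in its preliminaries.
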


\begin{proof}
 It follows from Proposition \ref{triangular} and the definition of a restricted triangular presentation that $\Gamma(G,S)$ is a connected simplicial graph and $Flag(G,S)$ is a welldefined simply connected flag complex. The complex $Flag(G,S)$ is systolic if and only if the link of every vertex is flag and 6-large. 
 As $Flag(G,S)$ is a flag complex, the link of a vertex is flag. The action of $G$ on the vertices of $\Gamma(G,S) = Flag(G,S)^{(1)}$ is transitive and by isometries. So $Flag(G,S)$ is systolic if and only if the link $L =$Lk$(e,Flag(G,S))$ is 6-large. This is equivalent to the conditions given by Lemma \ref{diagonals}. 
 Since $G$ acts properly discontinuously and cocompactly on $Flag(G,S)$, the group $G$ is systolic if $Flag(G,S)$ is systolic. 
\end{proof}

We call a presentation satisfying the conditions of Theorem \ref{systolic}, a \textit{systolic presentation}.
We say a group is \textit{Cayley systolic} if it admits a systolic presentation. By Proposition \ref{free} a Cayley systolic group $G$ with systolic presentation $\langle S\mid R\rangle$ acts freely on $Flag(G,S)$. One can also note that free products of Cayley systolic groups are also Cayley systolic. More generally we do not know under which conditions amalgamated products of Cayley systolic groups are systolic or Cayley systolic.

\begin{Lemma}\label{free counterexample}
	Consider the set $S = \{a,b,c,d,u,v,w,x\}$ and the sets \begin{align*}
	& R_1 = \{uav^{-1}, wbv^{-1}, udx^{-1}, wcx^{-1}\}, \\
	& R_2 = \{bvw^{-1}, cxw^{-1}, avu^{-1}, dxu^{-1}\}, \\
	& R_3 = \{vbw^{-1}, xcw^{-1}, uxd^{-1}, uva^{-1}\}, \\
	& R_4 = \{dxu^{-1}, avu^{-1}, vwb^{-1}, xwc^{-1}\}, \\
	& R_5 = \{vua^{-1}, vwb^{-1}, xwc^{-1}, xud^{-1}\}.
	\end{align*}
	For all $i\in \{1,2,3,4,5\}$, the presentation $\langle S\mid R_i\rangle$ is a restricted triangular presentation. Additionally the presentation $\langle S\mid R_i \rangle$ satisfies all the conditions of Theorem \ref{systolic} except condition i.
\end{Lemma}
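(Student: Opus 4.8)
The plan is to handle the five presentations uniformly by first identifying each group $G_i=\langle S\mid R_i\rangle$. In each of the five relator sets the four generators $a,b,c,d$ each occur in exactly one relator, so one may solve for them and eliminate them by Tietze transformations, in every case leaving no relations among $u,v,w,x$; hence $G_i\cong F(u,v,w,x)$ is free of rank $4$, and $a,b,c,d$ become explicit reduced words of length two in $u^{\pm1},v^{\pm1},w^{\pm1},x^{\pm1}$. For $R_1=\{uav^{-1},wbv^{-1},udx^{-1},wcx^{-1}\}$, for example, $a=u^{-1}v$, $b=w^{-1}v$, $c=w^{-1}x$, $d=u^{-1}x$. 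Working inside a free group is what makes all the subsequent equality tests decidable, since they reduce to comparisons of reduced words.

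Second, I would record for each $G_i$ the reduced word representing each of the eight generators and its inverse, check that the resulting sixteen words are pairwise distinct, and enumerate the products $st$ with $s,t\in S$ to see exactly which lie in $S\cup S^{-1}$. Distinctness of the sixteen words gives $S\cap S^{-1}=\emptyset$ (and in particular $e\notin S$ and $s^2\neq e$, since a free group is torsion-free). The list of two-letter products that land in $S\cup S^{-1}$ then yields everything at once: it shows that the only triples with $s_1s_2s_3^{-1}=e$ are those listed in $R_i$ and that no triple satisfies $s_1s_2s_3=e$, so by Proposition \ref{triangular} the presentation is triangular with relator set exactly $R_i$; and it lets one verify the implication $s_1s_2s_3\in S\Rightarrow s_1s_2,\,s_2s_3\in S$. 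Together these establish that $\langle S\mid R_i\rangle$ is a restricted triangular presentation (so by Proposition \ref{free} the action is free), and the reduced words are short enough that this is a bounded, finite check.

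Third, I would verify the conditions of Theorem \ref{systolic} through the link $L=\mathrm{Lk}(e,Flag(G_i,S))$ using the dictionary of Lemma \ref{diagonals}, under which condition $j$ is exactly the assertion that every $4$-cycle of type $j$ from Lemma \ref{4-cycle} has a diagonal. The relators of $R_i$ are chosen so that, after matching the generators to the bound variables of condition $i$, they are precisely the four edges of a single $4$-cycle of type $i$; for $R_1$ this is the all-positive cycle with $v=ua=wb$ and $x=ud=wc$. That cycle occurs in $L$, and its two possible diagonals would require some $k\in S$ with $w=uk$ or $v=xk$, i.e. $k=u^{-1}w$ or $k=x^{-1}v$ in $F(u,v,w,x)$; since neither reduced word is one of the eight generators, the cycle is diagonal free and condition $i$ fails. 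It then remains to check that no diagonal free $4$-cycle of any type $j\neq i$ occurs---equivalently, that whenever the hypothesis of condition $j$ is satisfied the diagonal label it requires already lies in $S$---which once more reduces to short reduced-word comparisons against the two-letter products found in the second step.

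The main obstacle is the bookkeeping of this last step rather than any single hard idea: the conditions have the form ``existence of a configuration of edges forces existence of a diagonal label,'' so to confirm the four surviving conditions one must range over all possible witnesses in $S$, not merely the configuration built into $R_i$. The free-group model keeps this finite and mechanical---one fixed finite list of admissible two-letter products controls every case---and the positive/negative inversion duality, which interchanges the cycle types of Lemma \ref{4-cycle} in pairs, makes the analyses of several of these presentations mirror one another and roughly halves the work.
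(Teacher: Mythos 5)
Your proposal takes essentially the same route as the paper's proof: the paper also eliminates $a,b,c,d$ by Tietze transformations to identify $\langle S\mid R_i\rangle$ with the free group $F(u,v,w,x)$, and then performs precisely the finite verifications you describe (checking all triples $\alpha\beta\gamma$ for the restricted triangular conditions, then checking the hypotheses and conclusions of conditions 1)--5)), with the only difference being that the paper delegates these mechanical reduced-word checks to SageMath and writes out only the case $R_3$, declaring the other four analogous. Your outline is correct, including the key points that freeness makes every comparison decidable and that in each case the relators assemble into a single diagonal-free $4$-cycle of type $i$ whose would-be diagonal labels ($u^{-1}w$, $x^{-1}v$, etc.) are not elements of $S$.
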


\begin{proof}
	We show this for $\langle S \mid R_3 \rangle$. The other cases can be checked in the same way. First note that $a = uv$, $b = v^{-1}w$, $c = x^{-1}w$ and $d = ux$. So the group $C_3 = \langle S\mid R_3\rangle$ is in fact $F(u,v,w,x)$ the free group on the generators $u,v,w$ and $x$. So the word problem in $C_3$ is solvable. We do the following calculations using SageMath. We check for all triples $(\alpha, \beta, \gamma)\in S^3$ that $\alpha\beta\gamma \neq 1$, $\alpha\beta\gamma\notin S$ and $\alpha\beta\gamma^{-1} = e \Leftrightarrow \alpha\cdot\beta\cdot\gamma^{-1}\in R_3$. Note that $\alpha\beta\gamma\notin S$ implies $S\cap S^{-1} = \emptyset$. So the presentation $\langle S\mid R_3\rangle$ is a restricted triangular presentation. Now we need to check the different conditions of Theorem \ref{systolic}. Using SageMath, we see that there are no elements $s_u, s_v, s_w, s_x, s_a, s_b, s_c, s_d \in S$ satisfying the hypothesis of one of the conditions \ref{first condition}, \ref{second condition}, \ref{fourth condition} and \ref{fifth condition} but the tuples $(s_u,s_v,s_x,s_b,s_c) = (u,v,x,b,c)$ and $(s_u,s_v,s_x,s_b,s_c)= (u,x,v,c,b)$  satisfy $s_vs_b=s_xs_c\in S$, $s_us_x \in S$ and $ s_us_v \in S$. We check that condition \ref{third condition} fails in at least one of those cases so we check that for all $s\in S$ we have $s\neq s_us_vs_b$ and $s_v\neq s_xs$ for at least one of those tuples.

\end{proof}

\begin{Beispiel}\label{counterexample}
The conditions in Theorem \ref{systolic} are all necessary as shown in Lemma \ref{free counterexample}. One can note that the given presentations are not systolic but the underlying groups are, since free groups are known to be systolic and even Cayley systolic with respect to the standard generating system. 
 Also note that the following presentation \begin{multline*}F_2\times F_2 = \langle a, b, c, d, \Delta_1, \Delta_2, \Delta_3, \Delta_4\mid \\ \Delta_1 = ab =ba,\ \Delta_2 = bc=cb,\ \Delta_3 = cd=dc,\ \Delta_4 = da=ad\rangle\end{multline*} satisfies conditions \ref{first condition}--\ref{fourth condition} but not condition \ref{fifth condition}. Again $F_2\times F_2$ is known to be systolic, using a construction by Elsner and Przytycki \cite{ElsPrz}. We do not know whether it is Cayley systolic. 
\end{Beispiel}

\begin{Bemerkung}\label{order} We can also interpret the conditions in Lemma \ref{diagonals} as conditions on a given order on $S$. 
 We first introduce left and right orders on $S\cup{e}$ by defining for $a, b\in S\cup\{e\}$:
\begin{enumerate}[label=\alph*)]
 \item $a\leq_L b$ if $\exists c\in S\cup\{e\}$ such that $ac = b$.
 \item $a\leq_R b$ if $\exists c\in S\cup\{e\}$ such that $ca = b$.
\end{enumerate}
These are indeed orders on $S\cup\{e\}$: as $e\in S\cup\{e\}$ they are reflexive, as there are no inverses in $S$ they are antisymmetric and by the additional condition on $S$ they are transitive.
An edge $v\rightarrow w$ in $L$ is equivalent to
\begin{enumerate}
 \item $v\leq_L w$ if $v, w\in S$ 
 \item $w^{-1}\leq_R v^{-1}$ if $v, w\in S^{-1}$
 \item $v^{-1}w\in S$ if $w\in S$ and $v\in S^{-1}$.
 \end{enumerate}
 Then the conditions on the existence of diagonals in given 4-cycles could be reformulated in the following way:
 
  \begin{enumerate}[label=\arabic*)]
  \item If $\exists u, v, w, x\in S$ pairwise distinct with $u\leq_L v \text{ and } w\leq_L v\text{ and } \linebreak u\leq_L x \text{ and } w\leq_L x$ then $u\leq_L w \text{ or } 
  x\leq_L v$.
  \item If $\exists u, v, w, x\in S$ pairwise distinct with $v\leq_R u\text{ and } x\leq_R u \text{ and }\linebreak v\leq_R w \text{ and }x\leq_R w$ then $u\leq_R w \text{ or }
  v\leq_R x$.
  \item If $\exists u, v, w, x\in S$ pairwise distinct with $uv\in S \text{ and } ux\in S\text{ and } \linebreak v\leq_L w \text{ and }x\leq_L w$ then $uw\in S\text{ or } v\leq_L x$. 
  \item If $\exists u, v, w, x\in S$ pairwise distinct with $vw\in S\text{ and } xw\in S \text{ and }\linebreak x\leq_R u \text{ and }v\leq_R u$ then $uw\in S\text{ or } v\leq_R x$.
  \item If $\exists u, v, w, x\in S$ pairwise distinct with $uv\in S \text{ and }ux\in S \text{ and }\linebreak wv\in S \text{ and }wx\in S$ then $v\leq_L x \text{ or } 
  w\leq_R u$.
 \end{enumerate}
Here the conditions are only given in terms of elements of $S$ associated to the vertices of the 4-cycles.
\end{Bemerkung}

 \section{Applications}

\subsection{Garside groups}

A group $G$ is said to be a \textit{Garside group} with \textit{Garside structure} $(G,P,\Delta)$ if it admits a submonoid $P$ with $P\cap P^{-1} = \{e\}$, called the \textit{monoid of positive elements} and a special element $\Delta\in P$ called \textit{Garside element} such that the following properties are satisfied:
\begin{enumerate}
 \item The partial order $\leq_L$ defined by $a\leq_Lb \Leftrightarrow a^{-1}b\in P$ is a lattice order, i.e. for every $a, b\in G$ there exists a unique lcm $a\vee_L b$ and a unique gcd $a\wedge_L b$ with respect to $\leq_L$ i.e. $\forall a, b\in G\ \exists! (a\vee_L b)$ such that $a\leq_L (a\vee_L b)$, $b\leq_L (a\vee_L b)$ and $\forall c\in G$ $a\leq_L c$ and $b\leq_L c$ imply $(a\vee_L b)\leq_L c$. Similarly $\forall a, b\in G\ \exists! (a\wedge_L b)$ such that $(a\wedge_L b)\leq_L a$, $(a\wedge_L b)\leq_L b$ and $\forall c\in G$ $c\leq_L a$ and $c\leq_L b$ imply $ c\leq_L (a\wedge_L b)$.
 \item The set $[e,\Delta] = \{a\in G\mid e\leq_L a\leq_L\Delta\}$, called the set of \textit{simple elements}, generates $P$.
 \item Conjugation by $\Delta$ preserves $P$ i.e. $\Delta^{-1}P\Delta = P$.
 \item For all $x\in P\setminus\{e\}$ one has $$\lVert x\rVert = sup\{k\in \mathbb{N}\mid \exists a_1,\dots, a_k\in P\setminus\{e\} \text{ such that } x = a_1\cdots a_k\}<\infty.$$
\end{enumerate}

A Garside structure $(G,P,\Delta)$ is said to be of \textit{finite type} if the set of simple elements $[e,\Delta]$ is finite. A group $G$ is said to be a \textit{Garside group of finite type} if it admits a Garside structure of finite type. Elements $x\in P\setminus\{e\}$ with $\lVert x \rVert = 1$ are called \textit{atoms}. The set of atoms also generates $P$.

\begin{Bemerkung}
 The monoid $P$ also induces a partial order $\leq_R$ which is invariant under right multiplication. We define $a\leq_Rb\Leftrightarrow ba^{-1}\in P$. It follows from the properties of $G$ that $\leq_R$ is also a lattice order, that $P$ is the set of elements $a$ such that $e\leq_Ra$ and that the simple elements are the elements $a$ such that $e\leq_R a\leq_R\Delta$. We denote $a\vee_R b$ the lcm and $a\wedge_R b $ the gcd with respect to $\leq_R$. So $\{a\in G\mid e\leq_L a\leq_L\Delta\}=\{a\in G\mid e\leq_R a\leq_R\Delta\}$, we also say that $\Delta$ is \textit{balanced}.
\end{Bemerkung}

\begin{Beispiel}
	Spherical Artin groups are Garside groups, in particular braid groups are Garside. Torus knot groups $\langle x, y\mid x^p = y^q\rangle$ with $p, q>1$ are Garside groups with Garside element $\Delta = x^p = y^q$. The fundamental group of the complement of $n$ line through the origin in $\mathbb{C}^2$ $$\langle x_1, \dots, x_n\mid x_1x_2\dots x_n = x_2\dots x_nx_1 = \dots = x_nx_1\dots x_{n-1}\rangle$$ is also a Garside group with Garside element $\Delta = x_1x_2\dots x_n$.
\end{Beispiel}

\begin{Lemma}\label{Garside presentation}
 Let $G$ be a Garside group with Garside element $\Delta$ and set of simple elements $S$. Then $G = \langle S\setminus\{e\}\mid s\cdot t = st\ \forall s, t\in S\ such\ that\ st\in S\rangle$ is a restricted triangular presentation of $G$.
\end{Lemma}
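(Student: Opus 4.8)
The plan is to verify directly, for $S' := S\setminus\{e\}$ (the nontrivial simple elements), each clause in the definition of a restricted triangular presentation; three of these are short. First, $S'\cap S'^{-1} = \emptyset$: every $s\in S'$ lies in $P\setminus\{e\}$, so $s^{-1}\notin P$ by $P\cap P^{-1} = \{e\}$, whence $s^{-1}\notin S'$ (this also forces $e\notin S'$ and $s^2\neq e$). Second, I would identify the relation set with the triangular one: the relators $s\cdot t\cdot(st)^{-1}$ for $s,t,st\in S'$ are exactly the words $a\cdot b\cdot c^{-1}$ with $a,b,c\in S'$ and $abc^{-1}=e$ in $G$, since $abc^{-1}=e$ forces $c=ab\in S'$. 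Third, I would rule out relators of the form $a\cdot b\cdot c$: if $a,b,c\in S'$ satisfied $abc=e$, then $ab=c^{-1}$; but $ab\in P$, and $ab\neq e$ (otherwise $b=a^{-1}\in P^{-1}$, contradicting $a\in P\setminus\{e\}$), while $c^{-1}\in P^{-1}\setminus\{e\}$, so $ab\in P\cap P^{-1}\setminus\{e\}$, contradicting $P\cap P^{-1}=\{e\}$. Thus the full triangular relation set reduces to the $a\cdot b\cdot c^{-1}$ part, which is precisely condition~2 of a restricted presentation.

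Next I would establish the closure condition: if $a,b,c\in S'$ and $abc\in S'$, then $ab\in S'$ and $bc\in S'$. Since $(ab)^{-1}(abc)=c\in P$ we have $ab\leq_L abc$, and $abc\leq_L\Delta$ as $abc$ is simple; by transitivity of $\leq_L$ (and $e\leq_L ab$, because $ab\in P$) the element $ab$ satisfies $e\leq_L ab\leq_L\Delta$, so it is a nontrivial simple, i.e.\ $ab\in S'$. Symmetrically, $(abc)(bc)^{-1}=a\in P$ gives $bc\leq_R abc$; since $abc$ is simple it satisfies $abc\leq_R\Delta$ by balancedness of $\Delta$, so $bc\leq_R\Delta$ with $e\leq_R bc$, making $bc$ a nontrivial simple, i.e.\ $bc\in S'$.

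The substantive step, and the main obstacle, is to show that $\langle S'\mid R\rangle$ genuinely presents $G$, i.e.\ that the ``local'' relations $s\cdot t=st$ generate all relations among the simples; combined with the points above and Proposition~\ref{triangular}, this yields that the presentation is triangular, and then restricted. The simples generate $G$ (they generate $P$, and $G$ is the group of fractions of $P$), so there is a canonical surjection $\phi\colon\langle S'\mid R\rangle\twoheadrightarrow G$, and the content is its injectivity. Here I would invoke the Garside normal form: every $g\in G$ has a unique expression $\Delta^m x_1\cdots x_\ell$ with each $x_i$ a simple element, $e\neq x_i\neq\Delta$, and the sequence left-weighted. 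The plan is to show that any word in $S'\cup S'^{-1}$ can be transformed, using only the defining relations $R$ (each realizing an elementary greedy/left-sliding move, whose existence is guaranteed by the lattice operations $\vee_L,\wedge_L$), into this normal form; two words equal in $G$ then share a normal form, hence are equal in $\langle S'\mid R\rangle$, proving injectivity.

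This completeness of the simple-element presentation is the technical heart: it is where the full Garside structure (the lattice order, finiteness of $\lVert\cdot\rVert$, and balancedness of $\Delta$) is genuinely used, and it is essentially the standard fact that a Garside group is presented by its divisors of $\Delta$ with the relations recording their products. By contrast, the verifications of $S'\cap S'^{-1}=\emptyset$, of the shape of $R$, of the absence of $a\cdot b\cdot c$ relators, and of the closure condition are all formal consequences of $P\cap P^{-1}=\{e\}$ and the divisibility calculus, so I would expect the normal-form argument to be the only place requiring real work.
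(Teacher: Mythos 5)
Your proposal is correct and takes essentially the same route as the paper: the paper disposes of the core fact --- that the nontrivial simple elements with the relations $s\cdot t = st$ present $G$ --- by citing Theorem~6.1 of Dehornoy--Paris \cite{DehPar}, which is precisely the ``standard fact'' you identify and sketch via normal forms, and it justifies the restricted-triangular conditions by the same observation you spell out, namely that all relations have the form $a\cdot b = c$ and that the set of simples is closed under taking left and right divisors. Your detailed verifications ($S'\cap S'^{-1}=\emptyset$ from $P\cap P^{-1}=\{e\}$, the impossibility of relators $a\cdot b\cdot c$, and the closure condition via $ab\leq_L abc\leq_L\Delta$ and $bc\leq_R abc\leq_R\Delta$ using balancedness of $\Delta$) are correct fillings-in of what the paper compresses into a single sentence.
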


\begin{proof} 
This  presentation is a direct consequence of Theorem 6.1 in \cite{DehPar}. It is a restricted triangular presentation as all relations are of the form $a\cdot b =c$ for some $a,\ b,\ c\in S$ and $S$ is the set of simple elements of a Garside group.
\end{proof}
We call this presentation a \textit{Garside presentation} of $G$ with Garside element $\Delta$. We can now state one of our main results.

\begin{Theorem}\label{Garside} Let $G$ be a Garside group of finite type with Garside element $\Delta$ and non trivial simple elements $S$. 
Then $Flag(G,S)$ is systolic if and only if for all $a, b\in S$, $a\wedge_Lb\in \{e,a,b\}$ and $a\wedge_R b\in \{e,a,b\}$. In particular if $Flag(G,S)$ is systolic then so is $G$. 

\end{Theorem}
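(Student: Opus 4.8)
The plan is to combine Lemma~\ref{Garside presentation} with Theorem~\ref{systolic}: since the Garside presentation $\langle S\mid R\rangle$ is a restricted triangular presentation, $Flag(G,S)$ is systolic if and only if the five conditions of Theorem~\ref{systolic} hold, equivalently (by Lemma~\ref{diagonals}) if and only if the link $L$ contains no diagonal-free $4$-cycle of the five types. I would first record the translation into the lattice: for simple elements $a,b$ the order $\leq_L$ of Remark~\ref{order} coincides with the Garside order restricted to $S\cup\{e\}$, and the meet $a\wedge_L b$ and join $a\vee_L b$ are again simple (since $a\wedge_L b\leq_L a\leq_L\Delta$ and $a\vee_L b\leq_L\Delta$, as $\Delta$ is a common upper bound), so they lie in $S\cup\{e\}$; the same holds for $\leq_R,\wedge_R,\vee_R$. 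Recall also that two positive vertices are adjacent iff they are $\leq_L$-comparable, two negative vertices iff their positive representatives are $\leq_R$-comparable, and $p^{-1}\sim v$ (negative to positive) iff $pv\in S$. It then suffices to prove that the five conditions are together equivalent to the two conditions $a\wedge_L b\in\{e,a,b\}$ and $a\wedge_R b\in\{e,a,b\}$ for all $a,b\in S$, which I abbreviate $(L)$ and $(R)$.

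For the implication $(L),(R)\Rightarrow$ systolicity I would use a single unifying principle: \emph{if two simple elements share a nontrivial common left-divisor, then $(L)$ forces them to be $\leq_L$-comparable, hence $\leq_L$-adjacent}, and dually for right-divisors and $(R)$. Applying this to each cycle type gives the required diagonal. In a type-$1$ configuration one has $u,w\leq_L v$ and $u,w\leq_L x$, so $u\leq_L v\wedge_L x\neq e$, whence $v\wedge_L x\in\{v,x\}$ and $v\sim x$. In a type-$3$ configuration $uv,ux\in S$ share the left-divisor $u\neq e$, so $uv\wedge_L ux\neq e$, forcing $uv,ux$ comparable and hence $v,x$ comparable (as $uv\leq_L ux\iff v\leq_L x$), giving $v\sim x$. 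In a type-$5$ configuration the same argument on $uv,ux$ (or on $uv,wv$ using $(R)$) again yields the diagonal. Types $2$ and $4$ are the left--right mirrors, handled by $(R)$ applied to $u\wedge_R w$ and to $vw\wedge_R xw$ respectively. Throughout I would work with the symmetric reading of the conditions (``every $4$-cycle of this type has a diagonal''), which is what $6$-largeness requires and what Lemma~\ref{diagonals} encodes.

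For the converse I argue by contraposition, and here lies the main obstacle: a failure of $(L)$ gives only a ``$\vee$-shape'' (two incomparable elements with a nontrivial meet), not the full ``double diamond'' needed for a diagonal-free type-$1$ cycle, so the cycle must be built using a negative vertex. The naive attempt — realizing the meet $M$ and the join $P\vee_L Q$ as the two tips of a type-$3$ cycle — fails, because $M^{-1}(P\vee_L Q)$ is again simple and produces a diagonal. The key idea is to replace the join by $\Delta$. Concretely, suppose $(L)$ fails: pick $P,Q\in S$ that are $\leq_L$-incomparable with $M:=P\wedge_L Q\notin\{e,P,Q\}$. I claim the four vertices $M^{-1}$ (negative), $M^{-1}P$, $M^{-1}Q$, $\Delta$ (positive) form a diagonal-free type-$3$ cycle: the edges exist because $M\cdot M^{-1}P=P\in S$ and $M\cdot M^{-1}Q=Q\in S$, and because $M^{-1}P,M^{-1}Q\leq_L\Delta$; the four vertices are distinct since $M^{-1}P,M^{-1}Q\neq\Delta$ (else $P$ or $Q$ would equal $M\Delta\notin S$). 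It is diagonal-free because the diagonal $M^{-1}\sim\Delta$ would require $M\Delta\in S$, which is false for $M\neq e$ (using that $\Delta$ is balanced, $M\Delta\not\leq_L\Delta$), while $M^{-1}P\sim M^{-1}Q$ would require $P,Q$ comparable. This configuration violates condition $3$ of Theorem~\ref{systolic}: with $u=M$, $v=M^{-1}P$, $x=M^{-1}Q$, and $w=\Delta$ one checks $uvb=M\Delta\notin S$ and $v=xk$ is impossible. A failure of $(R)$ produces, in the same way, a diagonal-free type-$4$ cycle violating condition $4$; I would obtain this cleanly from the type-$3$ case via the left--right duality of the Garside structure (the reversing anti-automorphism swaps $\leq_L\leftrightarrow\leq_R$ and fixes $S$ and $\Delta$).

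Combining the two directions gives that $Flag(G,S)$ is systolic exactly when $(L)$ and $(R)$ hold, and the final ``in particular'' clause is immediate from the ``moreover'' part of Theorem~\ref{systolic}, since the Garside presentation is restricted triangular and $G$ therefore acts properly discontinuously, cocompactly (and, by Proposition~\ref{free}, freely) on $Flag(G,S)$. The steps I expect to need the most care are verifying all the edge/non-edge and distinctness claims in the converse construction, and making the duality argument for $(R)$ precise.
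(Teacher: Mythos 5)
Your proposal follows essentially the same route as the paper's proof: your ``unifying principle'' for the forward direction is exactly the paper's case-by-case verification of conditions 1)--5) (apply the hypothesis to the two simples sharing the common left- or right-divisor coming from the tips of each cycle type), and your diagonal-free cycle $\{M^{-1},\,M^{-1}P,\,\Delta,\,M^{-1}Q\}$ for a failure of $(L)$ is literally the paper's configuration $c^{-1},k_a,\Delta,k_b$ (where $c=a\wedge_L b$, $a=ck_a$, $b=ck_b$, $\Delta=k_ar_a=k_br_b$), violating condition 3.

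The one point where you deviate is the $(R)$ case: the paper simply writes out the mirror-image construction $\{\Delta^{-1},\,MP^{-1},\,MQ^{-1},\,M\}$ violating condition 4, whereas you invoke a ``reversing anti-automorphism'' of $G$ fixing $S$ and $\Delta$. Be careful here: such an anti-automorphism is not part of the Garside axioms and need not exist in general (a Garside monoid need not be anti-isomorphic to itself). The duality you want is correctly expressed through the opposite group $G^{\mathrm{op}}$, whose positive monoid is $P$ with reversed multiplication, whose Garside element is again $\Delta$, and whose set of simples is again $S$ because $\Delta$ is balanced; one then transports the type-3 counterexample through the graph isomorphism $g\mapsto g^{-1}$ between the Cayley graphs of $(G^{\mathrm{op}},S)$ and $(G,S)$, which exchanges positive and negative vertices and turns it into exactly the type-4 configuration above. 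Either this repaired duality argument or the paper's explicit two-line mirror computation closes the gap; everything else in your proposal is sound.
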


\begin{proof}
By Lemma \ref{Garside presentation}, the Garside presentation of $G$ with generating set $S$ is a restricted triangular presentation.
By Proposition \ref{triangular}, $Flag(G,S)$ is well-defined. 
By Theorem \ref{systolic}, $Flag(G,S)$ is systolic if and only if the conditions \ref{first condition}--\ref{fifth condition} are satisfied.

Assume that for all $a, b\in S$ $a\wedge_Lb, a\wedge_R b\in\{e,a,b\}$. Then

\ref{first condition} If $\exists u, w, a, b, c, d\in S, u\neq w, a\neq b$ with $ua=wb\in S$ and $ud = wc\in S$, then $u\leq_L ua$ and $u\leq_L ud$ so $ua\wedge_L ud\neq e$, so either $ua\wedge_L ud = ua$ or $ua\wedge_L ud = ud$, say $ua\wedge_Lud=ud$ so $ua = udk$ for some $k\in S$.

\ref{second condition} If $\exists v, x, a, b, c, d\in S, v\neq x, a\neq b$ with $bv=cx\in S$ and $av=dx\in S$ then $v\leq_R bv$ and $v\leq_R av$ then $av\wedge_R bv\neq e$, so either $av\wedge_R bv = av$ or $av\wedge_R bv = bv$, say $av\wedge_R bv = bv$ so $av = kbv$ for some $k\in S$.

\ref{third condition} If $\exists u, v, x, b, c\in S, v\neq x$ with $ux\in S, uv \in S$ and $vb=xc \in S$ then $u\leq_L ux$ and $u\leq_L uv$ so $uv\wedge_L ux\neq e$, so either $uv\wedge_L ux=uv$ or $uv\wedge_L ux=ux$, say $uv\wedge_L ux= ux$ so $uv = uxk$ for some $k\in S$ and then $v = xk$.

\ref{fourth condition} If $\exists v, w, x, a, d \in S$, $v\neq x$ with $vw\in S, xw\in S$ and $dx=av\in S$ then $w\leq_R vw$ and $w\leq_R xw$ so $vw\wedge_R xw\neq e$, so either $vw\wedge_R xw = vw$ or $vw\wedge_R xw = xw$, say $xw\wedge_R vw = vw$ so $xw= kvw$ for some $k\in S$ and then $x = kv$.

\ref{fifth condition} If $\exists u, v, w, x\in S, v\neq x, u\neq w$ with $wv\in S, wx\in S, uv\in S$ and $ux\in S$ then $u\leq_L uv$ and $u\leq_L ux$ so $uv\wedge_L ux\neq e$, so either $uv\wedge_L ux = uv$ or $uv\wedge_L ux = ux$,  say $uv\wedge_L ux = uv$ so $uv= uxk$ for some $k\in S$ and then $v = xk$.

So if for all $a, b\in S$, $a\wedge_Rb, a\wedge_Lb\in\{e,a,b\}$, the conditions \ref{first condition}--\ref{fifth condition} of Theorem \ref{systolic} are satisfied and so $Flag(G,S)$ is systolic, which directly implies that $G$ is sytolic.

We now show the other implication. First assume $\exists a, b\in S$, $a\neq b$, with $a\wedge_Lb = c$ for some $c\in S\setminus\{a, b\}$, i.e $c\neq e, a ,b$. Then $\exists k_a, k_b, r_a, r_b\in S$ with $a = ck_a$, $b=ck_b$ and $\Delta = k_ar_a=k_br_b$. Then $c, k_a, k_b, r_a, r_b\in S$ and $k_a\neq k_b$, $ck_a =a\in S$, $ck_b = b\in S$ and $\Delta = k_ar_a = k_br_b\in S$. But $ck_ar_a = c\Delta \notin S$ and $\nexists k\in S$ with $k_a = k_bk$ since $c = a\wedge_L b \neq b$, similarly $\nexists k\in S$ with $k_b = k_ak$ since $c = a\wedge_L b \neq a$. So condition \ref{third condition} of Theorem \ref{systolic} fails. So $Flag(G,S)$ is not systolic.
Finally assume $\exists a, b\in S$, $a\neq b$, with $a\wedge_R b=c$ for some $c\in S\setminus\{a, b\}$, i.e. $c\neq e, a, b$. Then $\exists k_a, k_b, r_a, r_b\in S$ with $a = k_ac$, $b=k_bc$ and $\Delta = r_ak_a=r_bk_b$. Then $c, k_a, k_b, r_a, r_b\in S$ and $k_a\neq k_b$, $k_ac =a\in S$, $k_bc = b\in S$ and $\Delta = r_ak_a = r_bk_b\in S$. But $r_ak_ac = \Delta c\notin S$ and $\nexists k\in S$ with $k_a = kk_b$ or $k_b = kk_a$ since $c =a\wedge_R b \neq a, b$. So condition \ref{fourth condition} of Theorem \ref{systolic} fails. So $Flag(G,S)$ is not systolic.
So if there exist $a, b\in S$ with $a\wedge_Lb \notin\{e,a,b\}$ or $a\wedge_Rb\notin\{e,a,b\}$, the complex $Flag(G,S)$ is not systolic.
\end{proof}

\begin{Beispiel}\label{general case}
	Let $x_1,\dots, x_n$ be $n$ letters and let $m$ be a positive integer. We define $$\prodG(x_1,\dots, x_p;m) = \underbrace{x_1x_2\dots x_px_1x_2\dots}_{m}.$$ and $\prodG(x_1,\dots,x_p;0) = e$. Consider the group \begin{equation*}\begin{split}G_{n,m} = \langle x_1,\dots, x_n\mid &\prodG(x_1,\dots,x_n;m) = \prodG(x_2,\dots,x_{n}, x_1;m)=\dots\\ &=\prodG(x_n, x_1,\dots,x_{n-1};m)\rangle.\end{split}\end{equation*} By Proposition 5.2 in \cite{DehPar}, this is a Garside group with Garside element \linebreak $\Delta_{n,m} = \prodG(x_1,\dots,x_n;m)$. When considering all indices modulo $n$, we can write the set of simple elements as $$ S = \{\prodG(x_i, \dots, x_{i+n};k) \mid 0\leq k \leq m \text{ and } 1\leq i \leq n\}.$$
	
	In particular, for $n = 1$, we have $G_{1,m} = \langle x_1 \rangle$ with Garside element $x_1^m$ and the simple elements are $S = \{x_1^i \mid 0\leq i \leq m\}$. Note that if $m=n =1$ we have $x_1 = \Delta_{1,1}$ and $S = \{1, x_1\}$. Also if $n>1$ we can assume $m>1$.
	
	More generally for some positive intergers $p$, $n_1, \dots, n_p$, $m_1,\dots, m_p$, the product $$ G = (\ast_{i=1}^p G_{n_i,m_i})/(\Delta_{n_i,m_i}=\Delta_{n_j,m_j} \forall i, j)$$ 
	is a Garside group with Garside element $\Delta = \Delta_{n_1,m_1} = \dots = \Delta_{n_p,m_p}$. We would like to remark that if $p >1$ and $n_i = m_i = 1$ for some $i$, say $i = p$, we have $(\ast_{i=1}^p G_{n_i,m_i})/(\Delta_{n_i,m_i}=\Delta_{n_j,m_j} \forall i, j) \cong (\ast_{i=1}^{p-1} G_{n_i,m_i})/(\Delta_{n_i,m_i}=\Delta_{n_j,m_j} \forall i, j)$. 
	So we can assume that if $p>1$, we have $m_i \geq 2$ for all $i\in\{1,\dots,p\}$. The next theorem shows that these are the only Garside groups with systolic Garside presentation. 
\end{Beispiel}

 \begin{Theorem}\label{systolic Garside} Let $G$ be a Garside group of finite type.  Then $G$ has a systolic Garside presentation if and only if $G = (\ast_{i=1}^p G_{n_i,m_i})/(\Delta_{n_i,m_i}=\Delta_{n_j,m_j} \forall i, j)$ for some positive integers $p, n_1,\dots, n_p$ and $m_1,\dots, m_p$.
  
 \end{Theorem}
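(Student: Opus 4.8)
The plan is to combine the characterization from Theorem~\ref{Garside} with a structural analysis of the lattice of simple elements. By Theorem~\ref{Garside}, $G$ has a systolic Garside presentation if and only if for all $a,b\in S$ we have $a\wedge_L b\in\{e,a,b\}$ and $a\wedge_R b\in\{e,a,b\}$. So the entire task reduces to classifying which Garside groups of finite type satisfy this two-sided condition on gcds, and then recognizing the resulting structure as the stated free-product-with-amalgamation of the $G_{n_i,m_i}$.

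First I would establish the easy direction. For each building block $G_{n,m}$ I would verify directly from the explicit description of its simple elements $S=\{\prodG(x_i,\dots,x_{i+n};k)\mid 0\le k\le m,\ 1\le i\le n\}$ in Example~\ref{general case} that any two simple elements have left-gcd and right-gcd lying in $\{e,a,b\}$; the key observation is that the simple elements form two totally ordered ``fans'' emanating from $e$ (one for $\leq_L$, one for $\leq_R$) so that comparability forces the gcd to be one of the two elements, and incomparability forces it to be $e$. I would then show this property is preserved under the amalgamated product $(\ast_{i=1}^p G_{n_i,m_i})/(\Delta_{n_i,m_i}=\Delta_{n_j,m_j})$: the simple elements of the product are the union of the simple elements of the factors (all sharing the common $e$ and the common $\Delta$), and two simple elements from different factors have gcd exactly $e$ (if both are $\neq\Delta$) since distinct factors intersect only in $\langle\Delta\rangle$. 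By Theorem~\ref{Garside} this gives systolicity, proving the ``if'' direction.

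The substance is the converse, where I would assume the two-sided gcd condition holds and reconstruct the product decomposition. The strategy is to analyze the atoms. Let $x_1,\dots,x_r$ be the atoms of $P$. The gcd condition applied to atoms and their products should force a rigid combinatorial structure: I would argue that the condition $a\wedge_L b\in\{e,a,b\}$ for all simple $a,b$ means the interval lattice $[e,\Delta]$ is built from chains, and in particular that each simple element is a product of atoms in an essentially unique ``cyclic'' pattern. Concretely, I would partition the atoms into blocks according to which atoms can appear consecutively in a simple element, show each block generates a subgroup isomorphic to some $G_{n_i,m_i}$ with its own local Garside element $\Delta_{n_i,m_i}$, and show that the global Garside element $\Delta$ forces $\Delta_{n_i,m_i}=\Delta_{n_j,m_j}$ across blocks. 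The two-sided nature of the hypothesis (both $\wedge_L$ and $\wedge_R$ constrained) is what rules out more complicated lattices and pins the factors down to exactly the $\prodG$-shaped relators, which are precisely the defining relations of $G_{n,m}$.

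I expect the main obstacle to be the converse, specifically proving that the gcd condition forces the simple elements to have the precise $\prodG(x_i,\dots,x_{i+n};k)$ form rather than some other chain structure. The delicate point is translating ``every pairwise gcd is trivial or one of the two arguments'' into the statement that each atom has, on each side, a \emph{unique} atom that can follow (resp.\ precede) it within a simple element; establishing this uniqueness, and then showing the resulting successor relation decomposes the atom set into cyclically-ordered blocks realizing the $G_{n_i,m_i}$, is where the real work lies. I would handle it by a careful case analysis on lengths $\lVert\cdot\rVert$ of simple elements, using the lattice property of $\leq_L$ and $\leq_R$ together with the finiteness of $[e,\Delta]$, and invoking the defining presentation of $G_{n,m}$ from Example~\ref{general case} to identify each block with the correct factor.
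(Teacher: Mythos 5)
Your proposal is correct and follows essentially the same route as the paper: both directions reduce to the gcd criterion of Theorem~\ref{Garside}, the ``if'' direction is the same explicit computation for $G_{n,m}$ plus the observation that simples from distinct factors have trivial gcds, and your ``unique atom that can follow/precede each atom within a simple element'' is precisely the paper's bijections $\xi,\rho\colon A\to A$, whose cycle decomposition yields the blocks $G_{n_i,m_i}$ amalgamated over $\Delta$. The only remark worth making is that the uniqueness you flag as the main obstacle is in fact immediate from the gcd condition (if $aa_1,aa_2\in S$ then $a\leq_L aa_1\wedge_L aa_2$, so the gcd is nontrivial and hence $aa_1=aa_2$), so no case analysis on lengths is needed.
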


\begin{proof} 
 We start by showing that if $G = (\ast_{i=1}^p G_{n_i,m_i})/(\Delta_{n_i,m_i}=\Delta_{n_j,m_j} \forall i, j)$ for some positive integers $p, n_1,\dots, n_p, m_1,\dots, m_p$ the group $G$ has a systolic Garside presentation. 
 We start with the case $p = 1$. If $n = m =1$, we have $S = \{x_1\}$ so the Garside presentation is systolic. Otherwise $G=G_{n,m}$ for some positive integers $n$ and $m$, $2\leq m$ and the Garside element is $\Delta= \Delta_{n,m} = \prodG(x_1,\dots,x_n;m)$. For simpler notation we always consider the index $i$ modulo $n$. The set of simple elements is $$ S = \{\prodG(x_i, \dots, x_{i+n};k) \mid 1\leq i\leq n \text{ and } 0\leq k \leq m\}.$$Then for $0\leq k \leq l \leq m$, we have \begin{multline*}\prodG(x_i, \dots, x_{i+n};k)\wedge_L\prodG(x_j, \dots, x_{j+n};l) \\ =  \begin{cases} e &\mbox{if } i\neq j \text{ and } l<m \\ \prodG(x_i, \dots, x_{i+n};k) &\mbox{if } i=j \text{ or } l=m.\end{cases}\end{multline*} Similarly for $0\leq k \leq l\leq m$, we have \begin{multline*}\prodG(x_i, \dots, x_{i+n};k)\wedge_R\prodG(x_j, \dots, x_{j+n};l) \\ = \begin{cases} e &\mbox{if } i+k\not\equiv j+l \text{ and } l<m\\ \prodG(x_i, \dots, x_{i+n};k)&\mbox{if } i+k\equiv j+l \text{ or } l=m.\end{cases}\end{multline*} So by Theorem \ref{Garside} the Garside presentation of $G_{n,m}$ is systolic.
 
 Now consider the case $p> 1$. Then the element $\Delta = \Delta_{n_1,m_1}=\dots=\Delta_{n_p,m_p}$ is the Garside element of $G$. Let $S$ be the set of simple elements of $G$ and $S_i$ the set of simple elements of $G_{n_i,m_i}$ for $i=1,\dots,p$. Then $S=\sqcup_{i=1}^p (S_i\setminus\{\Delta_{n_i,m_i}, e\})\sqcup\{\Delta,e\}$ is a partition of the set of simple elements. For every $i, j\in\{1,\dots,p\}$, $S_i$ satisfies $s\wedge_Lt, s\wedge_R t\in \{e,s,t\}$ for all $s,t\in S_i$ and we have $s\wedge_Rt=s\wedge_Lt=e$ if $s\in S_i\setminus\{\Delta\}, t\in S_j\setminus\{\Delta\}$, $i\neq j$. So for all $s,t\in S$ we have $s\wedge_Rt, s\wedge_Lt\in\{s,t,e\}$. Hence by Theorem \ref{Garside}, $G$ has a systolic Garside presentation.

Now assume $G$ is a Garside group with systolic Garside presentation. Let $\Delta$ be the Garside element, $P$ the monoid of positive words, $S$ the set of simple elements and $A$ the set of atoms of $P$. So by Theorem \ref{Garside}, we have for all $s,t\in S$, $s\wedge_Lt,\ s\wedge_Rt\in\{e,s,t\}$. First note that if $\Delta \in A$ we have $S = \{e, \Delta\}$, $A = \{\Delta\}$, $G = \mathbb{Z}$ and $P = \mathbb{N}$. Hence we can write $G$ as $(\ast_{i=1}^p G_{n_i,m_i})/(\Delta_{n_i,m_i}=\Delta_{n_j,m_j} \forall i, j)$ with $p=1$ and $n_1 =m_1 =1$. So we can now assume that $\Delta \notin A$.

We start with showing that for all $a\in A$ there exists a unique $\xi(a)\in A$ such that $a\xi(a)\in S$. As $\Delta\notin A$, such a $\xi(a)\in A$ exists. Assume it is not unique, so let $a_1, a_2\in A$ with $aa_1, aa_2 \in S$. Since $aa_1\wedge_L aa_2 \in\{e, aa_1, aa_2\}$ and $a, a_1, a_2$ are atoms, $aa_1=aa_2$ and hence $a_1=a_2$. Similarly for all $a\in A$ there exists a unique $\rho(a) \in A$ such that $\rho(a)a\in S$. In particular for all $a\in A$, $\rho(\xi(a)) = a =\xi(\rho(a))$. So the map $\xi : A\rightarrow A$ is bijective with inverse map $\rho:A\rightarrow A$. As $A$ is finite, the map $\xi$ is a permutation of $A$. So the orbits of $\xi$ form a partition of $A$ and $\xi$ can be written as a product of cycles of disjoint support, $$\xi = (a_{1,1},a_{1,2},\dots, a_{1,n_1})(a_{2,1},\dots,a_{2,n_2})\cdots (a_{p,1},\dots,a_{p,n_p}).$$ Then for any $1\leq i \leq p$ there exists $m_i \geq 2$ such that $\Delta = \prodG(a_{i,1},\dots,a_{i,n_i};m_i)$. 
Since $\Delta$ is balanced we have \begin{multline*}\prodG(a_{i,1},\dots,a_{i,n_i};m_i)=\prodG(a_{i,2},a_{i,3},\dots,a_{i,n_i},a_{i,1};m_i)\\=\dots=\prodG(a_{i,{n_i}},a_{i,1}, \dots,a_{i,n_i-1};m_i).\end{multline*} The set $A_i =\{ a_{i,1},\dots,a_{i,n_i}\}$ corresponds to the atoms of $G_{n_i,m_i}$. Hence $$G = (\ast_{i=1}^p G_{n_i,m_i})/(\Delta_{n_i,m_i}=\Delta_{n_j,m_j} \forall i, j).$$
\end{proof}



\begin{Korollar}
	\begin{enumerate}
		\item The group $$G_{n,n} = \langle x_1,\dots,x_n\mid x_1x_2\dots x_n = x_2x_3\dots x_nx_1=\dots=x_nx_1\dots x_{n-1}\rangle,$$ which is the fundamental group of the complement of $n$ lines through the origin in $\mathbb{C}^2$, has a systolic Garside presentation with respect to the Garside element $\Delta = x_1x_2\dots x_n$.
 \item Consider $n$ positive integers $p_1,\dots,p_n$, $p_i\geq 2$. The group $$G = \langle x_1,\dots, x_n\mid x_1^{p_1}=x_2^{p_2}=\dots=x_n^{p_n}\rangle$$ has a systolic Garside presentation with respect to the Garside element \linebreak $\Delta = x_1^{p_1}$. 
 In particular torus knot groups $\langle x, y\mid x^p=y^q\rangle$ have a systolic Garside presentation with respect to the Garside element $\Delta = x^p$.
\end{enumerate}
\end{Korollar}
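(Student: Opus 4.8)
The plan is to deduce both statements directly from the classification in Theorem \ref{systolic Garside}, which asserts that a finite-type Garside group has a systolic Garside presentation exactly when it is of the form $(\ast_{i=1}^p G_{n_i,m_i})/(\Delta_{n_i,m_i}=\Delta_{n_j,m_j}\ \forall i,j)$. So in each case the only real task is to exhibit the given group as such an amalgam and to read off its Garside element; no new gcd computations are required beyond those already carried out for the family $G_{n,m}$ in Example \ref{general case} and in the proof of Theorem \ref{systolic Garside}.

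For the first statement, I would observe that $G_{n,n}$ is precisely the group $G_{n,m}$ of Example \ref{general case} in the case $m=n$. Unwinding the definition of $\prodG$ gives $\prodG(x_1,\dots,x_n;n)=x_1x_2\cdots x_n$, and more generally the $i$-th cyclic rotation $\prodG(x_i,\dots,x_{i+n-1};n)=x_i x_{i+1}\cdots x_{i+n-1}$ (indices mod $n$), so the defining relations of $G_{n,n}$ coincide verbatim with those of $G_{n,m}$ for $m=n$. This is the case $p=1$ of the classification, so $G_{n,n}$ has a systolic Garside presentation, and its Garside element is $\Delta_{n,n}=\prodG(x_1,\dots,x_n;n)=x_1x_2\cdots x_n$, as claimed. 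The identification with the fundamental group of the complement of $n$ lines is the one already recorded in the introductory example.

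For the second statement, I would use that for $n=1$ one has $G_{1,m}=\langle x_1\rangle\cong\mathbb{Z}$ with Garside element $\Delta_{1,m}=x_1^{m}$, as noted in Example \ref{general case}. Taking one factor $G_{1,p_i}=\langle x_i\rangle$ for each $i$ and forming $(\ast_{i=1}^n G_{1,p_i})/(\Delta_{1,p_i}=\Delta_{1,p_j}\ \forall i,j)$ amalgamates the generators along $x_1^{p_1}=\cdots=x_n^{p_n}$, which is exactly the given presentation of $G$. Since each $p_i\geq 2$, this lies in the family of Theorem \ref{systolic Garside} (with $p=n$, every $n_i=1$, and $m_i=p_i$), so $G$ has a systolic Garside presentation with Garside element $\Delta=x_1^{p_1}$. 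The torus knot group is then the special case $n=2$, $p_1=p$, $p_2=q$.

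The proof involves essentially no obstacle: the content is the bookkeeping check that the two presentations literally coincide, i.e.\ that the relators of $G_{n,n}$ match those of $G_{n,m}$ at $m=n$, and that amalgamating the cyclic factors $G_{1,p_i}$ reproduces $\langle x_1,\dots,x_n\mid x_1^{p_1}=\cdots=x_n^{p_n}\rangle$. The only point requiring a moment's care is to confirm that the hypothesis $p_i\geq 2$ places us in the admissible range of Example \ref{general case}, where one may assume $m_i\geq 2$ once $p>1$, so that Theorem \ref{systolic Garside} applies without modification.
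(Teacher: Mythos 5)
Your proposal is correct and is essentially the paper's own (implicit) argument: the corollary is stated without proof precisely because it is the direct specialization of Theorem \ref{systolic Garside}, with $G_{n,n}$ being the case $p=1$, $n_1=m_1=n$, and $\langle x_1,\dots,x_n\mid x_1^{p_1}=\dots=x_n^{p_n}\rangle$ being the amalgam of the factors $G_{1,p_i}\cong\mathbb{Z}$ with $\Delta_{1,p_i}=x_i^{p_i}$, exactly as you identify them. The Garside elements you read off also match those recorded in Example \ref{general case} and in the proof of the theorem.
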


\begin{Bemerkung}
 In \cite{DehPar}, Example 5 mentions a generalization of the groups $G_{n,m}$. Let $p, n, m \in \mathbb{N}$, $2\leq m$, $2\leq p \leq n$. It claims that \begin{equation*}\begin{split}K_{n,p,m} = \langle x_1 ,x_2, \dots, x_n\mid \prodG(x_1,\dots,x_p; m) &= \prodG(x_2,\dots,x_{p+1}; m)=\dots \\ &=\prodG(x_{n-p+1},\dots,x_{n}; m)\\ &=\prodG(x_{n-p+2},\dots,x_{n},x_1; m)=\dots \\ &=\prodG(x_{n},x_1,\dots,x_{p-1}; m)\rangle \end{split} \end{equation*} is a Garside group by \cite{DehPar} Proposition 5.2. But this is a wrong application of \cite{DehPar} Proposition 5.2, as one can see by considering for example the case $n=5$, $p=3$ and $m=4$ or more generally $m = p+1$. So the question of whether $K_{n,p,m}$ is a Garside group when $p\neq n$ remains open.
\end{Bemerkung}

Question: For $k\geq 2$, the group $G_k = \langle a, b\mid b^k = aba\rangle$ is Garside with Garside element $\Delta = b^{k+1} = abab= baba$. The elements $b^k$ and $bab$ are both simple elements, but $b^k\wedge_L bab = b\notin\{e, b^k, bab\}$. So it does not satisfy the conditions of Theorem \ref{Garside}. 
 Is it systolic? Is there another Garside structure on this group for which it has a systolic Garside presentation?

\begin{Bemerkung}[Restrictions on systolicity in Garside groups] Consider a Garside group $G$ with Garside element $\Delta$. Then $\Delta^k$ is in the center of $G$ for some positive integer $k$. Let $S$ be the set of simple elements. Suppose there is some balanced element $\delta\in S\setminus\{\Delta, e\}$. Let $T=\{a\in G\mid 1\leq_L a\leq_L \delta\}=\{a\in G\mid 1\leq_R a\leq_R \delta\}$. Let $a\in T$ be an atom and suppose $\delta\notin\langle a\rangle$. Then $\delta^l$ is in the center of the subgroup of $G$ generated by $T$ for some positive integer $l$. If $\langle T \rangle \neq G$, we have $\langle a, \delta^l, \Delta^k\rangle \cong \mathbb{Z}^3$. This implies in particular that $G$ is not systolic.
 
\end{Bemerkung}

 
 
 
 
 
 

\subsection{Artin groups}
 Recall the notation $\left[xyx\dots\right]_k = \underbrace{xyx\dots}_{k}\ $ and $\left[\dots xyx\right]_k = \underbrace{\dots xyx}_{k}\ $ for some $k\in\mathbb{N}$. Given a finite labeled simple graph $\Gamma$, the \textit{Artin group associated to $\Gamma$} is given by \begin{multline*}A_{\Gamma} = \langle s_v, v\in V\mid \left[s_vs_ws_v\dots\right]_{m_e} = \left[s_ws_vs_w\dots\right]_{m_e} \\ \text{ for all edges } e = (v,w) \text{ with label } m_e\rangle.\end{multline*}
For $n\in\mathbb{N}_{\geq2}$, the \textit{dihedral Artin group} $DA_n$ is the Artin group defined by the graph \begin{tikzpicture}  \draw[fill] (0,0) circle [radius=0.05];
 \draw[fill] (1,0) circle [radius=0.05];
 \draw (0,0)node[left]{$a$}-- node[above]{$n$}(1,0)node[right]{$b$}; \end{tikzpicture}. So $DA_n =\langle a, b\mid \left[aba\dots\right]_n = \left[bab\dots\right]_n \rangle$. 
\newline The \textit{Artin monoid associated to $\Gamma$} is given by \begin{multline*}
A_{\Gamma}^+ = \langle s_v, v\in V\mid \left[s_vs_ws_v\dots\right]_{m_e} = \left[s_ws_vs_w\dots\right]_{m_e} \\ \text{ for all edges } e = (v,w) \text{ with label } m_e\rangle^+.\end{multline*} By \cite{Paris1} Theorem 1.1, the canonical homomorphism $\iota: A_{\Gamma}^+ \hookrightarrow A_{\Gamma}$ is an injection.

\begin{Korollar}\label{dihedral}
The dihedral Artin group $DA_n$ 
is Cayley systolic for all $n\in\mathbb{N}_{\geq2}$.
\end{Korollar}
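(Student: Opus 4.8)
The plan is to recognize $DA_n$ as one of the Garside groups already classified in Theorem \ref{systolic Garside} and to quote that result. First I would record that the dihedral Artin group is a spherical (finite-type) Artin group, hence a Garside group of finite type, whose Garside element is $\Delta = \left[aba\dots\right]_n = \left[bab\dots\right]_n$. Comparing its defining relation with the definition of $G_{n,m}$ in Example \ref{general case} and setting $x_1 = a$, $x_2 = b$, one has $\prodG(x_1,x_2;n) = \left[aba\dots\right]_n$ and $\prodG(x_2,x_1;n) = \left[bab\dots\right]_n$, so $DA_n$ is literally the group $G_{2,n}$. Since $n \geq 2$, this is an admissible member of the family, and Theorem \ref{systolic Garside} (with $p = 1$, $n_1 = 2$, $m_1 = n$) immediately yields that its Garside presentation is systolic, so $DA_n$ is Cayley systolic.

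If one prefers a self-contained verification, the alternative is to apply Theorem \ref{Garside} by hand. Here I would first describe the simple elements: they are exactly $e$, the alternating words $\alpha_k = \left[aba\dots\right]_k$ and $\beta_k = \left[bab\dots\right]_k$ for $1 \leq k \leq n-1$, and $\Delta = \alpha_n = \beta_n$, giving the $2n-1$ nontrivial divisors of $\Delta$ in bijection with the nontrivial elements of the dihedral group $I_2(n)$. I would then check that $s \wedge_L t,\ s \wedge_R t \in \{e,s,t\}$ for all nontrivial simple $s,t$. The case involving $\Delta$ is trivial, since $\Delta$ dominates everything and the meet equals the smaller element. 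For two proper alternating words the left meet is governed by first letters: the words starting with $a$ are the nested family $\alpha_1 \leq_L \cdots \leq_L \alpha_{n-1}$ and those starting with $b$ the nested family $\beta_1 \leq_L \cdots \leq_L \beta_{n-1}$, so if the first letters agree the meet is the shorter word, and if they differ the left meet is $e$.

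The genuinely delicate step, and the one I expect to be the main obstacle, is the right meet $s \wedge_R t$, that is, the longest common suffix. The key structural feature I would exploit is that in an alternating word the letter immediately preceding any suffix is forced by alternation from the suffix's first letter. Hence, if a common suffix $r$ of two alternating words $s$ and $t$ were a proper right-divisor of both, the letters preceding $r$ in $s$ and in $t$ would be forced to coincide, so $r$ could be extended, contradicting maximality; a short parity check confirms the remaining bookkeeping. Therefore the longest common suffix is either $e$ (when the last letters differ) or equals the shorter word (when one is a suffix of the other), so $s \wedge_R t \in \{e,s,t\}$ in every case. With the meet criterion of Theorem \ref{Garside} verified, $Flag(DA_n, S \setminus \{e\})$ is systolic, and hence $DA_n$ is Cayley systolic.
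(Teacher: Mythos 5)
Your first paragraph is exactly the paper's proof: the paper likewise identifies $DA_n$ with the Garside group $G_{2,n}$ (with Garside element $\Delta = \left[aba\dots\right]_n$) and concludes by citing Theorem \ref{systolic Garside}. Your optional second, hands-on verification via Theorem \ref{Garside} is also sound, but it merely re-derives for the special case $G_{2,n}$ the left- and right-meet computation that the paper already performs inside the proof of Theorem \ref{systolic Garside}, so it adds nothing beyond the citation.
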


\begin{proof}
The dihedral Artin group $DA_n$ corresponds to the Garside group $G_{2,n}$ with Garside element $\Delta = \left[aba\dots\right]_{n}$. So by Theorem \ref{systolic Garside} it is Cayley systolic.  
\end{proof}

 

Recall the following definitions: An \textit{orientation} on a simple graph $\Gamma$ is an assignement $o(e)$ for each edge $e\in E(\Gamma)$ where $o(e)$ is a set of one or two endpoints of $e$. An edge with both endpoints assigned is \textit{bioriented}. The startpoint $i(e)$ is an assignement of one or two startpoints of $e$, which is consistent with the choice of $o(e)$. If $o(e)$ consists of one point, $i(e)$ consists of one point such that $e = (i(e),o(e))$, if $o(e)$ consists of two points then so does $i(e)$.
We say that a cycle $\gamma$ is \textit{directed} if for each $v\in\gamma$ there is exactly one edge $e\in\gamma$ with $v\in o(e)$. A cycle is \textit{undirected} if it is not directed.
We say that a 4-cycle $\gamma = (a_1,a_2,a_3,a_4)$ is \textit{misdirected} if 
$a_2\in o(a_1,a_2), a_2\in o(a_2,a_3), a_4\in o(a_3,a_4)$ and $a_4\in o(a_4,a_1)$.

\begin{tikzpicture}

 \draw[fill] (0,0) circle [radius=0.05];
 \draw[fill] (1,0) circle [radius=0.05];
 \draw[fill] (0,1) circle [radius=0.05];
 \draw[fill] (1,1) circle [radius=0.05];
 \draw[->-, >=latex](0,0)node[left]{$a_1$}--(1,0)node[right]{$a_4$};
 \draw[->-, >=latex](1,1)node[right]{$a_3$}--(1,0);
 \draw[->-, >=latex](0,0)--(0,1)node[left]{$a_2$};
 \draw[->-, >=latex] (1,1)--(0,1);
 \end{tikzpicture}

\begin{Lemma}\label{dual presentation}
Let $\Gamma$ be a labeled simple graph with an orientation $o$ such that an edge is bioriented if and only if it has label $2$. Assume that every 3-cycle in $\Gamma$ is directed. Let $V(\Gamma) = \{v_1,\dots,v_n\}$. Consider the set \linebreak $S = \{ x_1, x_2, \dots, x_n\}\cup \{\Delta_e, t^e_1,t^e_2,\dots, t^e_{m_e-2}\mid e\in E(\Gamma)\text{ with label } m_e \}$. For each $e\in E(\Gamma)$ with label $m_e\geq 3$ and with $i(e) = v_i$ and $o(e) = v_j$, we consider the set $R_e = \{ x_ix_j\Delta_e^{-1}, x_jt^e_1\Delta_e^{-1}, t^e_1t^e_2\Delta^{-1}_e,\dots, t_{m_e-3}^et_{m_e-2}^e\Delta_e^{-1},t^e_{m_e-2}x_i\Delta_e^{-1}\}$. For each $e\in E(\Gamma)$ with label $m_e = 2$ and with $o(e) = i(e)=\{v_i,v_j\}$ we consider the set $R_e = \{x_ix_j\Delta_e^{-1}, x_jx_i\Delta_e^{-1}\}$. Let $R=\bigcup_{e\in E(\Gamma)} R_e$. Then the presentation $\langle S\mid R \rangle$ is a restricted triangular presentation of $A_{\Gamma}$. We call this the dual presentation  of $A_{\Gamma}$ with orientation $o$.
\end{Lemma}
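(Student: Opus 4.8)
The plan is to prove two things: first that $\langle S\mid R\rangle$ really is a presentation of $A_\Gamma$, and second that this presentation is restricted triangular. The guiding observation is that, for a fixed edge $e=(v_i,v_j)$ with label $m=m_e$ oriented $i(e)=v_i\to o(e)=v_j$, the relations in $R_e$ say exactly that the cyclically indexed elements $y_0=x_i,\ y_1=x_j,\ y_2=t^e_1,\dots,y_{m-1}=t^e_{m-2}$ satisfy $y_ky_{k+1}=\Delta_e$ for all $k\in\mathbb{Z}/m$. In other words $R_e$ is the Garside (dual) presentation of the dihedral Artin group on $\{x_i,x_j\}$, with dual Garside element the Coxeter element $\Delta_e=x_ix_j$ and atoms the $m$ elements $y_k$; the label-$2$ case is the degenerate one where $\Delta_e=x_ix_j=x_jx_i$.

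For the first part I would use Tietze transformations. The auxiliary generators $\Delta_e,t^e_1,\dots,t^e_{m-2}$ attached to an edge $e$ occur only in $R_e$, so they can be eliminated edge by edge: the relations $x_ix_j\Delta_e^{-1},\ x_jt^e_1\Delta_e^{-1},\ t^e_1t^e_2\Delta_e^{-1},\dots$ express $\Delta_e$ and each $t^e_k$ as words in $x_i,x_j$ via $y_{k+1}=y_k^{-1}\Delta_e$, hence $y_k=\Delta_e^{-1}y_{k-2}\Delta_e$. The single surviving relation then becomes a relation in $x_i,x_j$ alone, and a direct computation, splitting into $m$ even and odd, shows it is equivalent to the Artin relation $[x_ix_j\dots]_m=[x_jx_i\dots]_m$: when $m=2p$ the closing relation reads $x_j(x_ix_j)^{p-1}x_i=(x_ix_j)^p$, and when $m=2p+1$ it reduces, after cancelling a leading $x_i$, to $(x_ix_j)^px_i=x_j(x_ix_j)^p$. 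Carrying this out for all edges turns $\langle S\mid R\rangle$ into the standard presentation of $A_\Gamma$.

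For the second part the key device is the exponent-sum homomorphism $\phi\colon A_\Gamma\to\mathbb{Z}$ sending every $x_v\mapsto 1$ (well defined since the two sides of each defining relation have equal length). On $S$ one has $\phi(x_v)=\phi(t^e_k)=1$ and $\phi(\Delta_e)=2$, so $\phi(S)\subseteq\{1,2\}$. This immediately yields $S\cap S^{-1}=\emptyset$ and $e\notin S$; it rules out relations of the form $a\cdot b\cdot c$ with $a,b,c\in S$ (since $\phi(abc)\ge 3\ne 0$), so $R$ contains only relations of the form $a\cdot b\cdot c^{-1}$; and it makes the condition ``$abc\in S\Rightarrow ab,bc\in S$'' vacuous, because $\phi(abc)\ge 3>2\ge\phi(s)$ for every $s\in S$, so no product of three generators lies in $S$. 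Finally, any relation $a\cdot b\cdot c^{-1}=e$ forces $\phi(a)=\phi(b)=1$ and $\phi(c)=2$, i.e. $a,b$ are atoms (the $x_v$'s or $t^e_k$'s) and $c=\Delta_f$ for some edge $f$.

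It therefore remains to show that every factorization $ab=\Delta_f$ with $a,b$ atoms of $S$ is one of the recorded consecutive pairs $y^f_ky^f_{k+1}$; this is the technical heart of the proof. I would localize the problem to the standard dihedral parabolic $A_{\{v_i,v_j\}}\cong DA_{m_f}$ on the endpoints of $f$. Each atom lies in the parabolic generated by the endpoints of its own edge, and the canonical projection $\pi\colon A_\Gamma\to W_\Gamma$ to the associated Coxeter group sends atoms to reflections and $\Delta_f$ to the Coxeter element $s_is_j$; since any length-two reflection factorization of $s_is_j$ uses reflections from its parabolic closure $\langle s_i,s_j\rangle$, together with injectivity and the intersection formula for standard parabolic subgroups (van der Lek, Paris), this pins $a$ and $b$ down to atoms of $A_{\{v_i,v_j\}}$. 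Inside $DA_{m_f}$ the dual Garside structure gives $ab=\Delta_f$ only for consecutive atoms $y^f_k,y^f_{k+1}$, which are exactly the recorded relations, so $R$ indeed records every length-three relation and the presentation is restricted triangular. The main obstacle is precisely this cross-edge control, namely preventing atoms borrowed from other edges that share a vertex with $f$ from producing an unrecorded factorization of $\Delta_f$; I expect the hypothesis that every $3$-cycle is directed to be what guarantees that edges meeting in a triangle contribute only compatibly overlapping dihedral subgroups, so that the localization above is clean.
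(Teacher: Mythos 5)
Your strategy is genuinely different from the paper's at the decisive step, and identical before it. The two preparatory parts match: the paper also recovers the standard Artin presentation by eliminating the $\Delta_e$ and $t^e_k$ (it does the computation directly rather than invoking Tietze moves), and your exponent homomorphism $\phi$ is exactly the paper's $\xi$, used for the same purpose of reducing everything to factorizations $ab=\Delta_f$ with $a,b$ atoms. From there the paper stays inside the Artin monoid: it invokes the injection $A_\Gamma^+\hookrightarrow A_\Gamma$ of \cite{Paris1} and runs a first/last-letter analysis of positive words under elementary braid transformations, and it is precisely there that the hypothesis that every 3-cycle is directed is used, in the form ``no generator can commute with both endpoints of an edge'', which blocks letters from migrating across the alternating block and producing unrecorded factorizations. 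You instead project to the Coxeter group: atoms go to reflections, reflections in a reduced factorization of $s_is_j$ lie in its parabolic closure $W_{\{i,j\}}$, the intersection formula excludes atoms of other edges, and the dual Garside structure of the dihedral Artin group finishes inside the parabolic. Each ingredient is a real theorem, but be aware of what you are importing: the parabolic-closure statement must hold for arbitrary, possibly infinite, Coxeter groups (Dyer, Baumeister--Dyer--Stump--Wegener, or a Tits-cone/Steinberg fixed-point argument); the intersection formula you actually need is the Coxeter one $W_I\cap W_J=W_{I\cap J}$ (van der Lek and Paris give the Artin analogue); you also need that within each dihedral parabolic the $m_f$ atoms map to the $m_f$ distinct reflections, so that $\pi$ is injective on atoms; and the dihedral dual-monoid facts are due to Picantin and Bessis. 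None of this machinery appears in the paper, whose argument is self-contained modulo \cite{Paris1}.

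The genuine problem is your closing paragraph. The localization argument you sketch never uses the hypothesis that every 3-cycle is directed, and you concede you do not know where it enters --- you only ``expect'' it to make the localization clean. But the Remark immediately following Lemma~\ref{dual presentation} asserts that the conclusion genuinely fails without this hypothesis: a triangle with labels $2,m,n$ ($m,n\geq 3$) is claimed there as a counterexample. So your proposal, as written, is in unresolved tension with the paper: either your compressed cross-edge step hides an error, or you have proved a strictly stronger statement than the paper believes true, in which case you must confront that remark directly (for instance, for labels $(2,3,3)$ the Artin group is the braid group on four strands, and one can check by hand whether every atom--atom factorization of each $\Delta$ is recorded --- your argument predicts it is). You cannot leave this as an expectation: the cross-edge control is exactly the step on which the paper spends the bulk of its proof, and it is the entire content of the lemma's hypothesis. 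Until you either write out the parabolic-closure argument in full (with the infinite-Coxeter-group citations above) and explain the discrepancy with the paper's remark, or locate where directedness is really needed, the proposal is incomplete at the one point where completeness matters.
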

\begin{proof}
	The standard presentation of $A_{\Gamma}$ is \begin{multline*}A_{\Gamma} = \langle x_1, \dots, x_n\mid \left[x_ix_jx_i\dots\right]_{m_e} = \left[x_jx_ix_j\dots\right]_{m_e} \\ \text{ for all edges } e = (v_i,v_j) \text{ with label } m_e\rangle. \end{multline*}
We first see that the dual presentation is indeed a presentation of $A_{\Gamma}$. Let $e\in E(\Gamma)$ with $m_e = 2$, $i(e) = o(e) = \{v_i,v_j\}$. Then the standard presentation states $x_ix_j = x_jx_i$. In the dual presentation the relations $R_e$ imply $\Delta_e = x_ix_j$ and $\Delta_e = x_jx_i$ and hence $x_ix_j=x_jx_i$.  Let $e\in E(\Gamma)$ with $m_e\geq 3$, $i(e) = v_i$ and $o(e)=v_j$. For $k \in\{1,\dots,m_e-2\}$, the relations $R_e$ imply on one hand \linebreak $t^e_k =( \left[\dots x_jx_ix_j\right]_{k})^{-1}\left[\dots x_jx_ix_j\right]_{k+1}$ and on the other hand \linebreak $t^e_k = \left[x_ix_jx_i\dots\right]_{m_e-k}(\left[x_ix_jx_i\dots\right]_{m_e-k-1})^{-1}$. This implies the relation \linebreak $\left[x_ix_jx_i\dots\right]_{m_e} = \left[x_jx_ix_j\dots\right]_{m_e}$. Conversely $\left[x_ix_jx_i\dots\right]_{m_e} = \left[x_jx_ix_j\dots\right]_{m_e}$ implies $x_ix_j = \left[x_jx_ix_j\dots\right]_{m_e}(\left[x_ix_jx_i\dots\right]_{m_e-2})^{-1} = x_jt^e_1$. For $ k\in\{1,\dots, m_e-3\}$ we have \begin{align*}
t^e_kt^e_{k+1} &= ( \left[\dots x_jx_ix_j\right]_{k})^{-1}\left[\dots x_jx_ix_j\right]_{k+1}( \left[\dots x_jx_ix_j\right]_{k+1})^{-1}\left[\dots x_jx_ix_j\right]_{k+2} \\ &= ( \left[\dots x_jx_ix_j\right]_{k})^{-1}\left[\dots x_jx_ix_j\right]_{k+2} \\ &=  x_ix_j.\end{align*} Finally when $k = m_e-2$ we have \begin{align*}t^e_{m_e-2}x_i &= (\left[\dots x_jx_ix_j\right]_{m_e-2})^{-1}\left[\dots x_jx_ix_j\right]_{m_e-1}x_i \\ &= (\left[\dots x_jx_ix_j\right]_{m_e-2})^{-1}\left[\dots x_ix_jx_i\right]_{m_e}\\ &=(\left[\dots x_jx_ix_j\right]_{m_e-2})^{-1}\left[\dots x_jx_ix_j\right]_{m_e}\\ &= x_ix_j.\end{align*} 

Let $a,b,c\in S$. To see that the dual presentation is a restricted triangular presentation we check that $abc\neq e$, $abc\notin S$ and $abc^{-1}=e \Rightarrow abc^{-1}\in R$. Note that $abc\notin S$ implies $S\cap S^{-1} = \emptyset$. We consider the following map $\xi: S \rightarrow \mathbb{Z}$ defined by $\xi(\Delta_e)=2 $ and $\xi(t^e_i) = 1$ for all $e\in E(\Gamma)$, $i\in\{1,\dots, m_e-2\}$ and $\xi(x_i) = 1$ for $i\in\{1,\dots, n\}$. As $\xi(a)+\xi(b) -\xi(c) = 0$ for all $a b c^{-1}\in R$, the map extends to a homomorphism $\xi: A_{\Gamma} \rightarrow \mathbb{Z}$. 
 For any $a,b, c\in S$, we have $\xi(a) + \xi(b) \geq 2$ and $\xi(abc) = \xi(a) + \xi(b)+\xi(c) \geq 3$ so it follows that $ab \neq e$, $abc\neq e$ and $abc\notin S$. Also for any $a, b, c\in S$ such that $abc^{-1}=e$ we have $\xi(abc^{-1}) = 0$, which implies $\xi(c) = \xi(a) + \xi(b)$ so we necessarily have $c = \Delta_e$ for some $e\in E(\Gamma)$ and $a,b \in S\setminus\{\Delta_e, e\in E(\Gamma)\}$. 
 So fix $e\in E(\Gamma)$ and $c = \Delta_e = x_ix_j$. Let $S_0 = S\setminus\{\Delta_e, e\in E(\Gamma)\}$. We need to verify for all $(a,b)\in S_0\times S_0$ that if $abc^{-1} = e$ we have $a b c^{-1}\in R$. 
 
 Our proof relies on the following property of $A_{\Gamma}^+$: Let $\alpha = x_{i_1}x_{i_2}\dots x_{i_l}$ and $\beta = x_{j_1}x_{j_2}\dots x_{j_k}$ be two words on $x_1,\dots, x_n$. They represent the same element in $A_{\Gamma}^+$ if and only if we can 
 transform $\alpha$ into $\beta$ using a finite number of transformations of the form $u\cdot \left[x_px_qx_p\dots\right]_{m_f}\cdot u' = u\cdot \left[x_qx_px_q\dots\right]_{m_f}\cdot u'$ for some $f = (v_p,v_q)\in E(\Gamma)$. If $\alpha$ does not contain a subword of this form, no transformation is possible and the expression is unique.
 
 Let $a = x_k$ and $b=x_l$ for some $k, l\in \{1,\dots,n\}$. Then $ab = c$ implies $x_kx_l = x_ix_j$ in $A_{\Gamma}$ hence the equality also holds in $A_{\Gamma}^+$. If $m_e\geq 3$ this necessarily implies $k = i$ and $l=j$. If $m_e = 2$, this implies either $k = i$ and $l=j$ or $k=j$ and $l=i$. In all of these cases the corresponding relation is in $R$.
 
 Let $a = x_k$ and $b = t^f_l$ with $k\in \{1,\dots,n\}$, $f=(v_p,v_q) \in E(\Gamma)$, \linebreak $m_f\geq 3$ and $1\leq l \leq m_f-2$. Set $l' = m_f-l-1$, so $1\leq l'\leq m_f-2$. Then $ab=c$ implies $x_kt^f_l = x_ix_j$ so $x_k\left[x_px_qx_p\dots\right]_{m_f-l}(\left[x_px_qx_p\dots\right]_{m_f-l-1})^{-1} = x_ix_j$ and hence $x_k\left[x_px_qx_p\dots\right]_{l'+1} = x_ix_j\left[x_px_qx_p\dots\right]_{l'}$ in $A_{\Gamma}^+$. As all 3-cycles are directed, $x_i, x_j, x_k$ cannot commute with both $x_p$ and $x_q$. So the last letter on the left hand side and on the right hand side is different and is either $x_p$ or $x_q$. The only way to change this last letter is to apply the relation  $\left[x_px_qx_p\dots\right]_{m_f} = \left[x_qx_px_q\dots\right]_{m_f}$. So we need $k=q$ and $l'=m_f-2$ so $l=1$. This implies the equality $x_px_q=x_ix_j$ so $p = i$ and $j=q$. Hence $f =e$, $a = x_j$, $ b = t^e_1$ and the corresponding relation is in $R$.
 
 Let $a = t^f_l$ and $b=x_k$ with $k\in \{1,\dots,n\}$, $f=(v_p,v_q) \in E(\Gamma)$, $m_f\geq 3$ and \linebreak $1\leq l \leq m_f-2$. Then $ab=c$ implies $t^f_lx_k = x_ix_j$ \linebreak so $( \left[\dots x_qx_px_q\right]_{l})^{-1}\left[\dots x_qx_px_q\right]_{l+1}x_k = x_ix_j$ so $\left[\dots x_qx_px_q\right]_{l+1}x_k = \left[\dots x_qx_px_q\right]_{l}x_ix_j$ in $A_{\Gamma}^+$. 
  As all 3-cycles are directed, $x_i, x_j, x_k$ cannot commute with both $x_p$ and $x_q$. The first letter on the left hand side and on the right hand side is different and is either $x_p$ or $x_q$. The only way to change this first letter is to use the relation  $\left[x_px_qx_p\dots\right]_{m_f} = \left[x_qx_px_q\dots\right]_{m_f}$. So we need $k=p$ and $l=m_f-2$. This implies $x_px_q=x_ix_j$ hence $ i =p$ and $j=q$. So $f = e$, $a= t^e_{m_e-2}$ and $b=x_i$ and the corresponding relation is in $R$.
 
 Finally let $a = t^f_k$ and $b = t^g_l$ with $f = (v_p, v_q), g = (v_r,v_s) \in E(\Gamma)$, $m_f, m_g\geq 3$, $1\leq k\leq m_f-2$ and $1\leq l\leq m_g-2$. Set $l' = m_g-l-1$. Then $ab=c$ implies \linebreak $t^f_kt^g_l = x_ix_j$ 
  so $\left[\dots x_qx_px_q\right]_{k+1}\left[x_rx_sx_r\dots\right]_{l'+1} =  \left[\dots x_qx_px_q\right]_{k}x_ix_j\left[x_rx_sx_r\dots\right]_{l'}$ in $A_{\Gamma}^+$. As all 3-cycles are directed, $x_r$ and $x_i$ cannot commute with both $x_p$ and $x_q$. Similarly $x_q$ and $x_j$ cannot commute with both $x_r$ and $x_s$. So the first letter on the left and on the right hand side is different and is either $x_p$ or $x_q$. Similarly the last letter on the left and on the right hand side is different and is either $x_r$ or $x_s$. So we need to apply the relations $\left[x_px_qx_p\dots\right]_{m_f} = \left[x_qx_px_q\dots\right]_{m_f}$ and $\left[x_rx_sx_r\dots\right]_{m_g} = \left[x_sx_rx_s\dots\right]_{m_g}$. This requires $r=p$ and $s=q$ and  $m_f\leq k+l'+2$. If $k+l'+2 > m_f$, we can apply the relation $\left[x_px_qx_p\dots\right]_{m_f} = \left[x_qx_px_q\dots\right]_{m_f}$ on a piece of the left hand side of length $m_f$. But since $k+l'+2 < 2m_f$, this only allows us to change either the first or the last letter but not both. So we necessarily have $k+l'+2=m_f$, so $ k = m_f-l'-2 = l-1$. 
   Applying the relation leads to $x_ix_j = x_px_q$ and $i=p$ and $j=q$. So $f=g=e$, $a = t^e_k$ and $b = t^e_{k+1}$ for some $k\in\{1,\dots,m_e-3\}$ and the corresponding relation is in $R$.

So we have indeed $abc\neq 1$, $abc\notin S$ and $abc^{-1}=e \Rightarrow abc^{-1}\in R$ for $a, b, c\in S$ so this is a restricted triangular presentation. 
\end{proof}
\begin{Bemerkung}
	Let $\Gamma$ be a simple graph, with edges labeled by numbers $\geq 2$ and with an orientation $o$ such that an edge is bioriented if and only if it has label $2$. Then it follows from the proof of Lemma \ref{dual presentation} that the dual presentation  of $A_{\Gamma}$ with orientation $o$ is always a presentation of $A_{\Gamma}$ but it is not always a restricted triangular presentation. For example if $\Gamma$ is a 3-cycle with one edge labeled by 2 and the two others labeled by $m, n \geq 3$, the presentation is not a restricted triangular presentation.  
\end{Bemerkung}
\begin{Bemerkung}
	In Corollary \ref{dihedral} we used the Garside structure on $DA_n$ induced by $G_{2,n}$. Lemma \ref{dual presentation} implies that $DA_n \cong G_{n,2}$, this presentation corresponds to another Garside structure. So in particular $G_{2,n}\cong G_{n,2}$ as groups but with different Garside structures.
\end{Bemerkung}

\begin{Theorem}\label{Artin}
Let $\Gamma$ be a simple graph, with edges labeled by numbers $\geq 2$ and with an orientation $o$ such that an edge is bioriented if and only if it has label $2$. Assume that 
every 3-cycle is directed and no 4-cycle is misdirected. Let $A$ be the Artin group associated to $\Gamma$. Then $A$ is Cayley systolic.
\end{Theorem}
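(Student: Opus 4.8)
The plan is to apply Theorem~\ref{systolic} to the dual presentation. By Lemma~\ref{dual presentation}, $\langle S\mid R\rangle$ is a restricted triangular presentation of $A_\Gamma$ --- this already uses that every $3$-cycle is directed --- so it suffices to verify the five conditions of Theorem~\ref{systolic}, for which I would work with the order-theoretic reformulation of Remark~\ref{order}. The structural input I would extract first is the additive homomorphism $\xi\colon A_\Gamma\to\mathbb{Z}$ from the proof of Lemma~\ref{dual presentation}, with $\xi(\Delta_e)=2$ and $\xi(x_i)=\xi(t^e_k)=1$. Since every generator has $\xi$-value $1$ or $2$, for $s,t\in S$ one has $st\in S$ if and only if $\xi(s)=\xi(t)=1$ and $st=\Delta_e$ for some edge $e$, and a strict relation $a<_L b$ (or $a<_R b$) in $S$ forces $b=\Delta_e$ with $a$ a length-one generator of the chain $x_{i(e)},x_{o(e)},t^e_1,\dots,t^e_{m_e-2}$ of $e$ (the chain being $x_i,x_j$ for a bioriented label-$2$ edge). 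Thus $a<_L\Delta_e$ (resp.\ $a<_R\Delta_e$) holds exactly when $a$ lies in the chain of $e$, and $st=\Delta_e$ exactly when $s,t$ are consecutive in that chain; recording these facts translates all five conditions into combinatorics of the oriented graph.

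Conditions~\ref{first condition} and \ref{second condition} I expect to be vacuously true. Their hypotheses require two distinct elements lying $\leq_L$ (resp.\ $\leq_R$) below two distinct upper bounds $\Delta_e\neq\Delta_f$; these two lower elements then lie in the chains of both $e$ and $f$. Since a generator $t^g_k$ lies in a single chain, the two lower elements must be vertex generators $x_\mu,x_\nu$, and $x_\mu,x_\nu$ lying in both chains forces $e$ and $f$ to join the same pair $\{v_\mu,v_\nu\}$, impossible in a simple graph. Hence these hypotheses never occur.

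For conditions~\ref{third condition} and \ref{fourth condition}, directedness of $3$-cycles enters. In \ref{third condition} the common left factor $u$ of two distinct $\Delta$'s is a vertex generator $x_p$, and $v,x$ are the successors of $x_p$ in the two chains; such a successor is the opposite endpoint of the edge when it points away from $v_p$ (or is bioriented), and is a $t$-generator when it points strictly toward $v_p$. A short case check (using chain-uniqueness of $t$-generators and simplicity of $\Gamma$) shows every case producing a $t$-generator is vacuous, leaving the case $v=x_q,\ x=x_r$ with both edges pointing away from $v_p$. The surviving hypothesis $v,x\leq_L w=\Delta_g$ then requires $g=(v_q,v_r)$ to be an edge, so $v_p,v_q,v_r$ is a $3$-cycle in which both edges at $v_p$ point away from it; such a $3$-cycle cannot be directed. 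As the conclusion ($uw\in S$, or a relation between the distinct length-one generators $v,x$) is impossible, the condition holds precisely because its hypothesis is empty. Condition~\ref{fourth condition} is the left/right mirror, using predecessors and the fact that in a directed $3$-cycle no vertex is the head of two cycle-edges.

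Condition~\ref{fifth condition} is the crux and is where the assumption that no $4$-cycle is misdirected is used. I would first show all four of $u,v,w,x$ are vertex generators: $u$ and $w$ are common left factors of two distinct $\Delta$'s and hence vertex generators $x_\alpha,x_\gamma$; if $v$ (or $x$) were a $t$-generator it would lie in a single chain, forcing the two edges meeting at it to coincide and hence $u=w$, a contradiction. Writing $v=x_\beta,\ x=x_\delta$, the relations $uv=\Delta_e,\ wv=\Delta_g,\ wx=\Delta_h,\ ux=\Delta_f$ become four oriented edges on the four distinct vertices $v_\alpha,v_\beta,v_\gamma,v_\delta$ with heads at $v_\beta,v_\beta,v_\delta,v_\delta$ (for the cyclic order $v_\alpha,v_\beta,v_\gamma,v_\delta$), i.e.\ exactly a misdirected $4$-cycle $(v_\alpha,v_\beta,v_\gamma,v_\delta)$. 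Since the conclusion $v\leq_L x$ or $w\leq_R u$ again cannot hold between distinct length-one generators, condition~\ref{fifth condition} is equivalent to the nonexistence of such a configuration, which is exactly the hypothesis. With all five conditions verified, Theorem~\ref{systolic} gives that the dual presentation is systolic and hence $A_\Gamma$ is Cayley systolic. The main obstacle I anticipate is precisely the bookkeeping in \ref{third condition}--\ref{fifth condition}: reading successors and predecessors correctly, treating bioriented label-$2$ edges (where both incident relations hold) uniformly, matching the orientation pattern in \ref{fifth condition} to the exact definition of misdirected, and confirming in each case that the conclusions genuinely fail, so that verification really does reduce to showing the hypotheses are vacuous.
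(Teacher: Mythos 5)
Your proposal is correct and follows essentially the same route as the paper: apply Theorem~\ref{systolic} to the dual presentation of Lemma~\ref{dual presentation}, observe that any product of two generators lying in $S$ must be some $\Delta_e$ and that divisors of $\Delta_e$ are confined to the chain of $e$, and then show each of the five conditions holds vacuously --- conditions 1) and 2) by simplicity of $\Gamma$, conditions 3) and 4) because their hypotheses would create an undirected $3$-cycle, and condition 5) because its hypothesis would create a misdirected $4$-cycle. Your bookkeeping via $\xi$ and chain-successors is just a repackaging of the paper's list of structural facts, and your case splits (by generator type rather than by coincidences among the edges $e,f,g$) are equivalent to the paper's.
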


\begin{proof}

 We prove using Theorem \ref{systolic} that $Flag(A_{\Gamma},S)$ with respect to the dual presentation with orientation $o$ is systolic. Note that with respect to the partial order defined in Remark \ref{order}:
 \begin{itemize}
  \item If $a, b\in S$ with $ ab\in S $ then $ab = \Delta_e$ for some $e\in E(\Gamma)$.
  \item For $s\in S$, $e\in E(\Gamma)$ we have $s\leq_L \Delta_e \Leftrightarrow s\leq_R \Delta_e$. 
  \item If $s\leq_L \Delta_e$ and $s\leq_L\Delta_f$ for some $s\in S$, $e,f\in E(\Gamma)$, $e\neq f$ \linebreak then $s\in \{x_1,x_2,\dots, x_n\}$. 
  \item If $s, t\leq_L \Delta_e$ and $s, t\leq_L\Delta_f$ for some $s, t\in S$, $s\neq t$ then $e=f$.
  \item If $x_i \leq_L \Delta_e$ then $x_i\in o(e)\cup i(e)$.
  \item If $\Delta_e =  x_ix_j$, $i(e)\cup o(e)=\{x_i, x_j\}$.
 \end{itemize}

 
 We check the different conditions of Theorem \ref{systolic}: 
 
 \ref{first condition} Assume $\exists u,w,a,b,c,d\in S$, $u\neq w$, $a\neq d$ with $ua=wb \in S$ and \linebreak $ud=wc\in S$. Then $ua=wb = \Delta_{e}$ and $ud=wc = \Delta_f$ for some $e, f\in E(\Gamma)$. As $u\neq w$ we have $e = f$. But then $a=d$ which is a contradiction. 
 
 \ref{second condition}Assume $\exists v,x,a,b,c, d\in S$, $v\neq x$, $a\neq b$ with $bv = cx\in S$ and $av = dx \in S$. Then $bv = cx = \Delta_e$ and $av=dx = \Delta_f$ for some $e,f\in E(\Gamma)$. As $v\neq x$ we have $e = f$. And so $a=b$ which is a contradiction.
 
 \ref{third condition} Assume $\exists u,v, x, b,c\in S$, $v\neq x$ with $ux, uv, vb, xc\in S$ and $vb=xc$. Then $uv = \Delta_e$, $ux = \Delta_f$ and $vb=xc = \Delta_g$ for some $e, f, g\in E(\Gamma)$. 
 \begin{itemize}                                                                                                                                                                                              \item If $e=f$ we have $x=v$ which is a contradiction.
  \item If $e=g$ and $e\neq f$ then $u,x \in\{x_1,\dots, x_n\}$. But then \linebreak $o(e)\cup i(e) = \{u,x\} = o(f)\cup i(f)$. So $e=f$, which is a contradiction.
  \item If $e\neq g$, $f\neq g$, $e\neq f$ then $u, v, x\in \{x_1, \dots, x_n\}$ and $u\in i(e)$, $u\in i(f)$, $v\in o(e)$, $x\in o(f)$, and $\{x,v\} = i(g)\cup o(g)$. But this corresponds to an undirected triangle in the defining graph $\Gamma$.                                                                                                                                                                                                                                                    \end{itemize}
 
 \ref{fourth condition} Assume $\exists v,w, x, a, b\in S$, $v\neq x$, with $vw, xw, dx,av\in S$ and $dx=av$. Then $vw = \Delta_e$, $xw = \Delta_f$ and $dx=av = \Delta_g$ for some $e, f, g\in E(\Gamma)$.
  \begin{itemize}                                                                                                                                                                                           \item If $e=f$ we have $x=v$ which is a contradiction.
  \item If $e=g$ and $e\neq f$ then $w,x \in\{x_1,\dots, x_n\}$. But then \linebreak $o(e)\cup i(e) = \{w,x\} = o(f)\cup i(f)$. So $e=f$, which is a contradiction. 
  \item If $e\neq g$, $f\neq g$, $e\neq f$ then $v,w, x\in \{x_1, \dots, x_n\}$ and $w\in o(e)$, $w\in o(f)$ and $v\in i(e)$, $x\in i(f)$ and $o(g)\cup i(g) = \{v,x\}$. But this corresponds to an undirected triangle in the defining graph $\Gamma$.                                                                                                                                                                                                                                                    \end{itemize}
 
 \ref{fifth condition} Assume $\exists u, v, w, x\in S$, $v\neq x, u\neq w$ with $wv, wx, uv, ux\in S$. Then $wv = \Delta_e$, $wx = \Delta_f$, $uv=\Delta_g$ and $ux = \Delta_h$ for some $e, f, g, h\in E(\Gamma)$. Then $v\neq x$ implies $e\neq f$ and $g\neq h$, and $u\neq w$ implies $e\neq g$ and $f\neq h$. So $u, v, w, x\in \{x_1,\dots,x_n\}$ and $v\in o(e)\cap o(g)$, $x\in o(f)\cap o(h)$, $w\in i(e)\cap i(f)$ and $u \in i(g)\cap i(h)$. Furthermore $i(e)\cup o(e) = \{w,v\}$ which 
  implies $v\neq w$ and $i(h)\cup o(h) = \{u, x\}$ which implies $u\neq x$. 
 So the 4-cycle $(u, v, w, x)$ is misdirected. 
 This is a contradiction to the orientation on $\Gamma$.

\end{proof}



\bibliographystyle{alpha}
\bibliography{MireilleBib}

\begin{thebibliography}{{Che}00}

\bibitem[Bes99]{Bestvina}
Mladen Bestvina.
\newblock Non-positively curved aspects of {A}rtin groups of finite type.
\newblock {\em Geom. Topol.}, 3:269--302, 1999.

\bibitem[{Che}00]{Chepoi}
Victor {Chepoi}.
\newblock {Graphs of some CAT(0) complexes}.
\newblock {\em {Adv. Appl. Math.}}, 24(2):125--179, 2000.

\bibitem[Cri05]{Crisp}
John Crisp.
\newblock Automorphisms and abstract commensurators of 2-dimensional {A}rtin
  groups.
\newblock {\em Geom. Topol.}, 9:1381--1441, 2005.

\bibitem[DP99]{DehPar}
Patrick Dehornoy and Luis Paris.
\newblock Gaussian groups and {G}arside groups, two generalisations of {A}rtin
  groups.
\newblock {\em Proc. London Math. Soc. (3)}, 79(3):569--604, 1999.

\bibitem[Els09]{Elsner}
Tomasz Elsner.
\newblock Isometries of systolic spaces.
\newblock {\em Fund. Math.}, 204(1):39--55, 2009.

\bibitem[EP13]{ElsPrz}
Tomasz Elsner and Piotr Przytycki.
\newblock Square complexes and simplicial nonpositive curvature.
\newblock {\em Proc. Amer. Math. Soc.}, 141(9):2997--3004, 2013.

\bibitem[GS91]{GerSho}
S.~M. Gersten and H.~B. Short.
\newblock Rational subgroups of biautomatic groups.
\newblock {\em Ann. of Math. (2)}, 134(1):125--158, 1991.

\bibitem[HMP14]{HanMar}
Richard~Gaelan Hanlon and Eduardo Mart\'{\i}nez-Pedroza.
\newblock Lifting group actions, equivariant towers and subgroups of
  non-positively curved groups.
\newblock {\em Algebr. Geom. Topol.}, 14(5):2783--2808, 2014.

\bibitem[HO20]{HuaOsa}
Jingyin Huang and Damian Osajda.
\newblock Large-type {A}rtin groups are systolic.
\newblock {\em Proc. Lond. Math. Soc. (3)}, 120(1):95--123, 2020.

\bibitem[JS06]{JanSwi}
Tadeusz Januszkiewicz and Jacek \'{S}wi\c{a}tkowski.
\newblock Simplicial nonpositive curvature.
\newblock {\em Publ. Math. Inst. Hautes \'{E}tudes Sci.}, (104):1--85, 2006.

\bibitem[Par02]{Paris1}
Luis Paris.
\newblock Artin monoids inject in their groups.
\newblock {\em Comment. Math. Helv.}, 77(3):609--637, 2002.

\bibitem[PS16]{PrzSch}
Piotr Przytycki and Petra Schwer.
\newblock Systolizing buildings.
\newblock {\em Groups Geom. Dyn.}, 10(1):241--277, 2016.

\bibitem[Zad14]{Zadnik}
Ga\v{s}per Zadnik.
\newblock Finitely presented subgroups of systolic groups are systolic.
\newblock {\em Fund. Math.}, 227(2):187--196, 2014.

\end{thebibliography}

\Addresses
\end{document}